\documentclass[pdflatex]{amsart}

\usepackage{amsmath}
\usepackage{amssymb}
\usepackage{latexsym}
\usepackage{amscd,amsthm}
\usepackage[small,it]{caption }
\usepackage{booktabs}
\usepackage{tabularx}
\usepackage{subfigure}
\usepackage{paralist}
\usepackage{bm}
\usepackage{tikz}

\newcommand{\group}[1]{\langle\,#1\,\rangle}
\newcommand{\divides}{\,|\,}

\newcommand{\card}[1]{{\mid\! #1 \!\mid}}
\newcommand{\pro}[2]{\langle\, #1, #2\, \rangle}

\newtheorem*{theorem*}{Theorem}
\newtheorem*{corollary*}{Corollary}
\newtheorem{theorem}{Theorem}[section]
\newtheorem{example}[theorem]{Example}
\newtheorem{corollary}[theorem]{Corollary}
\newtheorem{prop}[theorem]{Proposition}
\newtheorem{lemma}[theorem]{Lemma}
\newtheorem{conjecture}[theorem]{Conjecture}

\theoremstyle{definition}
\newtheorem{definition}[theorem]{Definition}
\newtheorem{remark}[theorem]{Remark}

\DeclareMathOperator{\prmat}{\mathsf M}

\DeclareMathOperator{\End}{End}
\DeclareMathOperator{\supp}{supp}
\DeclareMathOperator{\aff}{aff}
\DeclareMathOperator{\lin}{lin}
\DeclareMathOperator{\degree}{deg}
\DeclareMathOperator{\lcm}{lcm}
\DeclareMathOperator{\Irr}{Irr}
\DeclareMathOperator{\conv}{conv}
\DeclareMathOperator{\GL}{GL}
\let\mod\relax
\DeclareMathOperator{\mod}{mod}
\newcommand\Mat{\textup{Mat}}

\newcommand{\vlx}{\lambda^{(x)}}
\newcommand{\vly}{\lambda^{(y)}}
\newcommand{\vlz}{\lambda^{(z)}}
\newcommand\C{{\mathbb C}}
\newcommand\N{{\mathbb N}}

\newcommand\R{{\mathbb R}}
\newcommand\Z{{\mathbb Z}}

\begin{document}
\title{Permutation Polytopes of Cyclic Groups}

\author[Baumeister]{Barbara Baumeister}
\address{Barbara Baumeister, Universit\"at Bielefeld, Germany}
\email{b.baumeister@math.uni-bielefeld.de}
\author[Haase]{Christian Haase}
\address{Christian Haase, Goethe-Universit\"at Frankfurt, Germany}
\email{haase@mathematik.uni-frankfurt.de}
\author[Nill]{Benjamin Nill}
\address{Benjamin Nill, Case Western Reserve University, Cleveland, OH, USA}
\email{benjamin.nill@case.edu}
\author[Paffenholz]{Andreas Paffenholz}
\address{Andreas Paffenholz, Technische Universit\"at Darmstadt, Germany}
\email{paffenholz@mathematik.tu-darmstadt.de}

\begin{abstract}
We investigate the combinatorics and geometry of permutation polytopes 
associated to cyclic permutation groups, i.e., the convex hulls of
cyclic groups of permutation matrices. We give formulas for their
dimension and vertex degree.
In the situation that the generator of the group consists of at most
two orbits, we can give a complete combinatorial description of the
associated permutation polytope. 
In the case of three orbits the facet structure is already quite
complex. For a large class of examples we show that there exist
exponentially many facets.
\end{abstract}

\maketitle

\section*{Introduction}

A {\em Permutation polytope} is the convex hull of a group of permutation 
matrices. We refer to the preceding article \cite{BHNP}
for some historical and motivational remarks. The 
most famous permutation polytope is the {\em Birkhoff polytope}, whose
vertex set is the entire set of $n \times n$-permutation matrices.
In \cite{BHNP} we proposed the systematic study of permutation
polytopes in their own right. We introduced suitable notion of
equivalences, 
studied the vertex-edge graph, products and free sums, 
and classified all permutation polytopes up to dimension four. 

In this article, we investigate permutation polytopes associated to
cyclic permutation groups.
In order to learn more about general permutation polytopes it seems to
be crucial to enhance our understanding of the convex hulls of
subgroups generated by only one element.
This boils down to the study of the elementary number theory of the
cycle structure of the generator permutation.
Already a relatively small input can generate fairly complicated
polytopes: take the group generated by a permutation which is the
product of three disjoint cycles of lengths $10$, $18$, $45$.
This leads to a $57$-dimensional polytope with $90$ vertices and
$15373$ facets whose vertex-edge graph is complete. 
This example is about as complex as we can handle computationally.
Still, using the structure of a permutation polytope it is possible to
determine important invariants of the polytope like the dimension and
the vertex degree in terms of the cycle lengths (see Section 2).
For groups generated by a permutation which is a product of at most
two cycles we can characterize the polytopes completely 
(Proposition~\ref{two}). However, the previous example indicates that
in the situation of three cycles the complexity of the facet structure
of these polytopes becomes enormous.
We show in Theorem~\ref{main-theo} that the number of facets in such a
specific situation grows indeed exponentially in the dimension. 

In many respects, our experience has turned out to be similar to the
challenges faced by Hood and Perkinson \cite{JH04} when investigating
the facets of the permutation polytope associated to the group of even
permutations. They also constructed exponentially many facets with
respect to the dimension of the polytope.
However, in their situation the number of vertices grows
exponentially as well, while in our case the number of vertices
remains polynomially bounded.

The features of many of these objects such as a large number of facets
and a complete vertex-edge graph are reminiscent of the properties of
cyclic polytopes \cite[pp.10-16]{Ziegler}. While the latter ones are
simplicial, in many of the cases considered here, each facet contains
far more than half of the total number of vertices of the polytope.
Permutation polytopes of cyclic permutation groups might be
considered as highly symmetric analogues of cyclic polytopes.
It was recently shown by Rehn~\cite{Rehn10} that if the order of a
cyclic permutation group is $n=k_1 \cdots k_r$, where $k_1, \ldots,
k_r$ are coprime prime powers, then the associated permutation
polytope has at least $k_1! \cdots k_r!$ many affine automorphisms. 
On the other hand, Kaibel and Wa\ss mer~\cite{KW11} show that the
order of the combinatorial automorphism group of a cyclic polytope is
at most twice its number of vertices.

Cyclic permutation polytopes -- and more generally abelian permutation
polytopes -- are instances of so-called marginal polytopes. Their
inequality description is important in statistics and optimization.
This will be explored in an upcoming paper~\cite{Marginalspaper}
(cf.~Remark~\ref{comment}). 

\subsection*{Note} One should not confuse `permutation polytopes' with
`orbitopes', the convex hull of an orbit of a compact group acting
linearly on a vector space.
Recently, Sanyal, Sottile and Sturmfels \cite{Orbitopes} gave a
systematic approach to orbitopes. They also studied the permutation
polytopes associated to the groups $O(n)$ and $SO(n)$.
In this setting permutation polytopes are called {\em tautological
  orbitopes}.
Since for each orbitope there is a permutation polytope mapping
linearly onto it, permutation polytopes serve as {\em initial objects}
in this context.

\subsection*{Organization of the paper.} In Section~1 we introduce notation and 
basic properties. In Section~2 we give formulas for the dimension and the 
vertex degree, and we describe a criterion when a vertex forms an edge
with the unit of the group. 
In Section~3 we study closely the situation when the group generator
is decomposed in at most three cycles. While we can completely
describe the case of the one or two cycles, the first difficult
situation occurs for three cycles, where we construct a large family
of facets for one infinite class of examples.

\subsection*{Acknowledgments} Many of these results are based on
extensive calculations using the software packages GAP~\cite{GAP4} and
polymake~\cite{polymake99}. The last three authors were supported by
Emmy Noether fellowship HA 4383/1 of the German Research Foundation
(DFG). The third author is supported by the US National Science Foundation 
(DMS 1102424). The last author is supported by the DFG Priority Program 1489.

\section{Notation and Basic Properties}

\subsection{Notation}

For a positive integer $n \in \N$ we denote 
\begin{align*} 
[n] &:= \{1, \ldots,n\}. 
\intertext{Since it will be more suitable later on, we also define}
[[n]] &:= \{0, \ldots, n-1\}.
\end{align*}
For a finite set $I \subset \N$ we denote by $\gcd(I)$ and $\lcm(I)$
the greatest common divisor and the least common multiple of all
elements in $I$, respectively.  By convention $\gcd(\emptyset) := 0$
and $\lcm(\emptyset) := 1$. For integers $k,l\in\Z$ we write $k
\divides l$ if $k$ divides $l$.

The convex and the affine hull of a set $S$ in a real vector space
will be denoted by $\conv(S)$ and by $\aff(S)$, respectively.

\subsection{Representation polytopes}
Let $V$ be a real $n$-dimensional vector space. Then $\GL(V)$ denotes
the set of automorphisms.
By choosing a basis we can identify $\GL(V)$ with the set $\GL_n(\R)$
of invertible $n\times n$-matrices.
In the same way, we identify $\End(V)$ with the vector space
$\Mat_n(\R)$ of $n\times n$-matrices.

Let $G$ be a group. A homomorphism $\rho \colon G \to \GL(V)$ is
called a {\em real representation}.
In this case
\[(\rho) := \conv(\rho(g) \;:\; g \in G) \ \subseteq \Mat_n(\R) \cong \R^{n^2}\]
is called the associated {\em representation polytope}.

\subsection{Permutation polytopes}
The symmetric group $S_n$ acts on the set $[n]$. By identifying $[n]$ with 
the basis vectors $\{e_1, \ldots, e_n\}$ of $\R^n$, we get a representation $S_n \to \GL(\R^n)$. 
This map identifies the symmetric group $S_n$ 
with the set of $n \times n$ {\em permutation matrices}, i.e., the set of matrices with entries $0$ or $1$ 
such that in any column and any row there is precisely one $1$. 
For a subset $G\subseteq S_n$ we let $\prmat(G)$ be the corresponding set of permutation matrices. 
For $S \subseteq G$ we let $\group S$ be the smallest subgroup of $G$ containing $S$.

An injective homomorphism $G \to S_n$ is called {\em permutation representation}. 
Subgroups $G\le S_n$ are called {\em permutation groups}. 
In this case, the representation polytope
\[P(G) := \conv(\prmat(G))\]
is called the {\em permutation polytope} associated to $G$.

The special case $G=S_n$ yields the well-known $n$th {\em Birkhoff
  polytope} $B_n:=P(\prmat(S_n))$ (see e.g.\ \cite{BS96}). It has
dimension $(n-1)^2$.

\subsection{Equivalences}

When working with permutation polytopes, one would like to identify
permutation groups that clearly define affinely equivalent permutation
polytopes.  Therefor, we introduced in~\cite{BHNP} the notion of
stable equivalence. Here, $\R[G]$ denotes the group algebra of $G$
with real coefficients.

\begin{definition}
  For a representation $\rho \colon G \to \GL(V)$ define the affine
  kernel $\ker^\circ \rho$ as
  $$\ker^\circ \rho := \left\{ \ \sum_{g \in G} \lambda_g g \in \R[G] \ : \
    \sum_{g \in G} \lambda_g \rho(g) = 0 \text{ and } \sum_{g \in G} \lambda_g = 0
    \ \right\}$$

Say that a real representation $\rho' \colon G \to \GL(V')$ is an {\em affine quotient} of $\rho$ if $\ker^\circ \rho \subseteq \ker^\circ \rho'$. Then real representations $\rho_1$ and $\rho_2$ of $G$ are {\em stably equivalent}, if there are affine quotients $\rho_1'$ of $\rho_1$ and $\rho_2'$ of $\rho_2$ such that $\rho_1 \oplus \rho_1' \cong \rho_2 \oplus \rho_2'$ as $G$-representations.
\end{definition}

\begin{example}
  The following representations of the group $\Z_4$ are stably   equivalent:
  \begin{align*}
    \langle (1 2 3 4)\rangle &\leq S_4\,,& \langle (1 2 3 4)     (5)\rangle &\leq S_5\,,\\
    \langle (1 2 3 4) (5 6)\rangle &\leq S_6\,,& \langle (1 2 3 4) (5     6) (7 8)\rangle &\leq S_8\,,\\
    \langle (1 2 3 4) (5 6 7 8) \rangle &\leq S_8\,.
  \end{align*} 
\end{example}

For the following, let us 
denote by $\Irr(G)$ the set of pairwise non-isomorphic irreducible $\C$-representations, i.e., homomorphisms
$G \to \GL(W)$ where $W$ is a $\C$-vector space which does not contain a proper $G$-invariant subspace. 
For instance, there is the {\em trivial representation}, $1_G$: $G \to \GL(\C)$, $g \mapsto 1$. 
As a $G$-representation over $\C$ any real representation $\rho: G \to \GL(V)$ splits into 
irreducible representations. We denote these {\em irreducible factors} of $\rho$ by $\Irr(\rho) \subseteq \Irr(G)$.

In \cite{BHNP} we proved an explicit criterion for the polytopes of
two representations to be stably equivalent.

\begin{theorem}[{Baumeister et~al.~\cite[2.3]{BHNP}}]\label{thm:stablyEquivalence}
  Suppose $\rho$ and $\bar{\rho}$ are stably equivalent real representations of a finite group $G$. 
Then $P(\rho)$ and $P(\bar{\rho})$ are affinely equivalent.

Two real representations are stably equivalent if and only if they contain the same non-trivial irreducible factors.
\end{theorem}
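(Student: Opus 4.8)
The first part follows directly from the definition of stable equivalence: by general principles, $\ker^\circ\rho$ is precisely the affine span of the linear relations among the vertices of $P(\rho)$, so an affine quotient $\rho'$ of $\rho$ gives a surjective affine map $P(\rho)\to P(\rho')$, and conversely $P(\rho\oplus\rho')$ is affinely isomorphic to $P(\rho)$ since appending $\rho'$ adds no new affine relations. Thus if $\rho_1\oplus\rho_1'\cong\rho_2\oplus\rho_2'$ then $P(\rho_1)\cong P(\rho_1\oplus\rho_1')\cong P(\rho_2\oplus\rho_2')\cong P(\rho_2)$ affinely. I would present this in a couple of sentences, citing the relation between $\ker^\circ$ and affine dependencies.

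For the second part, the plan is to translate the condition ``same non-trivial irreducible factors'' into an equality of affine kernels and invoke the definition directly. Write $\rho=1_G^{\oplus a}\oplus\bigoplus_{\chi\neq 1}\chi^{\oplus m_\chi}$ over $\C$ (using that $G$ is finite, so Maschke applies and $\C[G]\cong\prod_{\chi\in\Irr(G)}\Mat_{d_\chi}(\C)$). The key computation is to identify $\ker^\circ\rho$ as a subspace of $\R[G]$: an element $\sum\lambda_g g$ lies in it iff its image under every $\chi$ with $m_\chi>0$ vanishes \emph{and} its image under $1_G$ (the sum of coefficients) vanishes. Equivalently, under the Wedderburn isomorphism, $\ker^\circ\rho$ is the real part of the sum of the matrix blocks $\Mat_{d_\chi}(\C)$ over those $\chi\neq 1_G$ with $m_\chi=0$. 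Crucially this depends only on the \emph{set} of non-trivial irreducible factors, not on the multiplicities, and the trivial factor never contributes because the condition $\sum\lambda_g=0$ is forced regardless. Hence $\Irr(\rho)\setminus\{1_G\}=\Irr(\bar\rho)\setminus\{1_G\}$ implies $\ker^\circ\rho=\ker^\circ\bar\rho$, so each is an affine quotient of the other, and taking $\rho_1'=\bar\rho$, $\rho_2'=\rho$ in the definition gives $\rho\oplus\bar\rho\cong\bar\rho\oplus\rho$ trivially, establishing stable equivalence.

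For the converse direction, suppose $\rho$ and $\bar\rho$ are stably equivalent, with witnessing affine quotients $\rho',\bar\rho'$ and an isomorphism $\rho\oplus\rho'\cong\bar\rho\oplus\bar\rho'$. I would argue that passing to an affine quotient can only \emph{shrink} the set of non-trivial irreducible factors in a controlled way: $\ker^\circ\rho\subseteq\ker^\circ\rho'$ forces, via the block description above, $\Irr(\rho')\setminus\{1_G\}\subseteq\Irr(\rho)\setminus\{1_G\}$. Then comparing irreducible factors on both sides of $\rho\oplus\rho'\cong\bar\rho\oplus\bar\rho'$ and using that the non-trivial factors of $\rho'$ already appear in $\rho$ (and similarly for $\bar\rho'$), one concludes the non-trivial factors of $\rho$ and $\bar\rho$ coincide.

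The main obstacle is the bookkeeping in the middle step: cleanly proving that $\ker^\circ\rho$ as a subspace of $\R[G]$ is exactly the span of the Wedderburn blocks indexed by the non-trivial irreducibles \emph{absent} from $\rho$, together with handling the real-versus-complex subtlety (a real representation may split over $\C$ into a complex irreducible and its conjugate, and one must check the real structure is preserved so that ``same non-trivial $\C$-irreducible factors'' is the right invariant). Once that identification is in hand, both implications are essentially formal.
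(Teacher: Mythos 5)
This statement is imported from \cite[Thm.~2.3]{BHNP} and the present paper contains no proof of it, so there is nothing internal to compare against; your sketch is correct and follows what is essentially the only natural route (and the one taken in \cite{BHNP}): $\ker^\circ\rho$ is the space of affine dependencies of the labelled point configuration $(\rho(g))_{g\in G}$, which both yields the affine equivalence $P(\rho)\cong P(\rho\oplus\rho')$ for an affine quotient $\rho'$ and identifies $\ker^\circ\rho$, via the Wedderburn decomposition of $\R[G]$, with the sum of the isotypic blocks of the non-trivial irreducibles \emph{absent} from $\rho$. The two points you flag as delicate are indeed the ones to check but cause no trouble: the trivial block never contributes because $\sum_g\lambda_g=0$ is imposed unconditionally, and the real-versus-complex issue is harmless because for real coefficients the image in the block of $\bar\chi$ is the conjugate of the image in the block of $\chi$, while $\Irr(\rho)$ is conjugation-closed for a real representation.
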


\begin{definition}
Two real representations $\rho_i \colon G_i \to \GL(V_i)$ (for $i=1,2$) of finite groups are {\em effectively equivalent}, if there exists an isomorphism $\phi \colon G_1 \to G_2$ such that $\rho_1$ and $\rho_2 \circ \phi$ are stably equivalent $G_1$-representations. 

Moreover, we say $G_1 \leq S_{n_1}$ and $G_2 \leq S_{n_2}$ are {\em effectively equivalent} permutation groups, if $G_1 \hookrightarrow S_{n_1}$ and $G_2 \hookrightarrow S_{n_2}$ are effectively equivalent permutation representations.
\end{definition}

By Theorem \ref{thm:stablyEquivalence} two permutation groups are
effectively equivalent if they are isomorphic as abstract groups such
that via this isomorphism the permutation representations contain the
same non-trivial irreducible factors. In particular, the associated
permutation polytopes are affinely equivalent.

The vector space $\Mat_n(\R)$ in which permutation polytopes live
comes with a natural lattice $\Mat_n(\Z)$ of integral matrices.
For polytopes with vertices in a lattice -- such as permutation
polytopes -- we can ask whether an affine equivalence preserves the
lattice. In that case we call the polytopes lattice equivalent.
Lattice equivalence of permutation polytopes is a subtle issue --
cf.~\cite[Example 2.9]{BHNP}.

\subsection{Dimension formula}

Let us recall that the degree of a representation is the dimension of
the vector space the group is acting on.  Guralnick and
Perkinson~\cite{GP05} determined the dimension of the polytope
associated to a representation of a group.

\begin{theorem}[{Guralnick and Perkinson~\cite[Thm.\ 3.2]{GP05}}]\label{thm:dimension}
Let $G\le S_n$ be a permutation group and $\rho$ a representation of $G$. Then
\begin{align*} 
\dim P(\rho) = \sum_{1_G \neq \sigma \in \Irr(\rho)}   (\degree \sigma)^2\,.
\end{align*}
\end{theorem}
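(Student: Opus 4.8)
The plan is to compute $\dim P(\rho) = \dim \aff\{\rho(g) : g\in G\}$ by first identifying the \emph{linear} span $U := \lin_\R\{\rho(g) : g \in G\} \subseteq \End(V)$ and then accounting for the (at most one-dimensional) gap between the affine and the linear hull. Since $\aff\{\rho(g)\}$ is an affine subspace of $U$, we have $\dim\aff\{\rho(g)\} = \dim_\R U$ if $0 \in \aff\{\rho(g)\}$, and $\dim\aff\{\rho(g)\} = \dim_\R U - 1$ otherwise. So two ingredients are needed: the value of $\dim_\R U$, and a criterion for when the origin lies in the affine hull.

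For $\dim_\R U$: extending $\rho$ linearly yields an algebra homomorphism $\tilde\rho \colon \R[G] \to \End(V)$ whose image is precisely $U$, and after base change $\dim_\R U = \dim_\C(U \otimes_\R \C) = \dim_\C \tilde\rho_\C(\C[G])$, where $\tilde\rho_\C \colon \C[G] \to \End(V\otimes_\R\C)$ is the linear extension of $\rho_\C := \rho \otimes_\R \C$. By Artin--Wedderburn, $\C[G] \cong \bigoplus_{\sigma \in \Irr(G)} \End_\C(W_\sigma)$, where $W_\sigma$ affords $\sigma$. I would then check that $\ker \tilde\rho_\C$ equals the ideal $\bigoplus_{\sigma \notin \Irr(\rho)} \End_\C(W_\sigma)$: the inclusion $\supseteq$ is immediate because $\rho_\C$ is a direct sum of the representations $\sigma \in \Irr(\rho)$, while $\subseteq$ holds because each $\sigma \in \Irr(\rho)$ occurs as a factor of $\rho_\C$, so $\tilde\rho_\C(x) = 0$ forces $\sigma(x) = 0$ for all such $\sigma$. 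Hence $\tilde\rho_\C(\C[G]) \cong \bigoplus_{\sigma \in \Irr(\rho)} \End_\C(W_\sigma)$, and therefore $\dim_\R U = \sum_{\sigma \in \Irr(\rho)} (\degree\sigma)^2$.

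For the criterion I claim $0 \in \aff\{\rho(g)\}$ if and only if $1_G \notin \Irr(\rho)$. If $1_G \notin \Irr(\rho)$, then the $G$-invariant subspace $V^G$ is zero, so $\frac{1}{|G|}\sum_{g \in G}\rho(g)$ (the projector onto $V^G$) is the zero matrix; its coefficients sum to $1$, so this exhibits $0$ as an affine combination of the $\rho(g)$. Conversely, if $1_G$ occurs in $\rho$, then $V^G \neq 0$ is $\rho$-invariant and every $\rho(g)$ restricts to the identity on it, so any affine combination $\sum_g \lambda_g \rho(g)$ restricts there to $(\sum_g \lambda_g)\,\mathrm{Id}_{V^G} = \mathrm{Id}_{V^G} \neq 0$; hence $0 \notin \aff\{\rho(g)\}$. (For a permutation representation the all-ones vector is fixed, so one is always in the case $1_G \in \Irr(\rho)$.)

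Combining: if $1_G \in \Irr(\rho)$ then $\dim P(\rho) = \dim_\R U - 1 = \sum_{\sigma \in \Irr(\rho)}(\degree\sigma)^2 - 1 = \sum_{1_G \neq \sigma \in \Irr(\rho)}(\degree\sigma)^2$; if $1_G \notin \Irr(\rho)$ then $\dim P(\rho) = \dim_\R U = \sum_{\sigma \in \Irr(\rho)}(\degree\sigma)^2 = \sum_{1_G \neq \sigma \in \Irr(\rho)}(\degree\sigma)^2$ since the trivial summand is absent. Either way the stated formula holds. The one genuinely nonroutine point is identifying $\ker\tilde\rho_\C$ inside the Wedderburn decomposition — equivalently, showing that the matrices $\rho(g)$ span a subalgebra of $\End(V)$ of dimension exactly $\sum_{\sigma \in \Irr(\rho)}(\degree\sigma)^2$; this is where Burnside's theorem (a single irreducible generates its full matrix algebra) together with the pairwise non-isomorphy of the factors (so no diagonal collapse occurs across distinct blocks) does the real work. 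Everything else is bookkeeping.
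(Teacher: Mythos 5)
Your proof is correct. The paper itself gives no proof of this statement---it is quoted from Guralnick and Perkinson---but your argument (linear span of the $\rho(g)$ as the image of the group algebra, the Wedderburn decomposition to compute its dimension, and the case distinction on whether the trivial representation occurs to pass from the linear to the affine hull) is essentially the standard proof given in that reference, so there is nothing to compare beyond noting the agreement.
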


\subsection{Indecomposable elements}

Every vertex of $P(G)$ corresponds bijectively to a group element of $G$. 
For the edges of $P(G)$ there is an explicit description. It was used by
Guralnick and Perkinson~\cite{GP05} to determine the diameter of a
permutation polytope.

\begin{definition}
Let $e \not= g \in G$.
\begin{itemize}
\item We denote by $F_g$ the smallest face of $P(G)$ containing the identity $e$ and $g$.
\item We denote by $g = z_1 \circ \cdots \circ z_r$ the unique \emph{disjoint cycle decomposition} of $g$ in $S_n$, i.e., $z_1, \ldots, z_r$ are cycles with pairwise 
disjoint support, and $g = z_1 \cdots z_r$.
\item Let $g = z_1 \circ \cdots \circ z_r$. For $h \in S_n$ we say $h$ is a \emph{subelement} of $g$ (we write $h \preceq g$), 
if there is a set $I \subseteq [r]$ such that $h = \prod_{i \in I} z_i$.
\item $g$ is called \emph{indecomposable} in $G$, if $e$ and $g$ are the only subelements of $g$ in $G$.
\end{itemize}
\end{definition}
With this definition one can characterize the faces $F_g$.
\begin{theorem}[{Guralnick and Perkinson~\cite[Thm.\ 3.5]{GP05}}]\label{smallfacetheo}
Let $g \in G$. The vertices of $F_g$ are precisely the subelements of $g$ in $G$. 
In particular, $e$ and $g$ form an edge of $P(G)$ if and only if $g$ is indecomposable in $G$. 
\end{theorem}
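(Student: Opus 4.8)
The plan is to pin down $F_g$ via the linear functionals that are maximized simultaneously at $e$ and $g$, and then to read off the answer from the disjoint cycle decomposition of $g$. For $h\in S_n$ and $c\in\Mat_n(\R)$ write $\langle c,h\rangle:=\sum_{i=1}^n c_{i,h(i)}$ for the inner product of $c$ with the permutation matrix of $h$. Since $F_g$ is the intersection of all faces of $P(G)$ that contain the matrices of $e$ and $g$, and since each such (nonempty) face has the form $\{x\in P(G):\langle c,x\rangle=\max_{k\in G}\langle c,k\rangle\}$ for a suitable $c$ (with $c=0$ giving $P(G)$ itself), a group element $h\in G$ represents a vertex of $F_g$ precisely when $\langle c,h\rangle=\langle c,e\rangle$ holds for every $c$ with $\langle c,e\rangle=\langle c,g\rangle=\max_{k\in G}\langle c,k\rangle$. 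As every vertex of the face $F_g$ is a vertex of $P(G)$, it suffices to prove that the group elements with this property are exactly the subelements of $g$ lying in $G$.

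First I would show that every vertex of $F_g$ is a subelement of $g$. Fix $(i,j)$ with $j\notin\{i,g(i)\}$. On $P(G)$ the functional $x\mapsto -x_{ij}$ is bounded above by $0$ and attains the value $0$ at both $e$ and $g$, since the $(i,j)$-entries of their permutation matrices vanish ($j\neq i$ and $j\neq g(i)$); hence the face it defines contains $e$ and $g$, so $F_g\subseteq\{x\in P(G):x_{ij}=0\}$. Varying $(i,j)$, any vertex $h$ of $F_g$ satisfies $h(i)\in\{i,g(i)\}$ for all $i$. At this point I would invoke the elementary fact that such an $h\in S_n$ must, on each cycle $z=(a_1\,a_2\,\cdots\,a_k)$ of $g$, either agree with $g$ at every $a_m$ or fix every $a_m$: if $h(a_m)=a_{m+1}=g(a_m)$ for some $m$, then injectivity of $h$ excludes $h(a_{m+1})=a_{m+1}$ and forces $h(a_{m+1})=a_{m+2}$, which propagates once around the cycle. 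Hence $h=\prod_{j\in I}z_j$, where $I$ is the set of cycles on which $h$ acts like $g$, so $h\preceq g$; and $h\in G$ because $F_g\subseteq P(G)$.

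Conversely, let $h=\prod_{j\in I}z_j$ be a subelement of $g$ with $h\in G$, and let $c$ satisfy $\gamma:=\langle c,e\rangle=\langle c,g\rangle=\max_{k\in G}\langle c,k\rangle$; I must show $\langle c,h\rangle=\gamma$. Put $\delta_j:=\sum_{i\in\supp(z_j)}\bigl(c_{i,g(i)}-c_{i,i}\bigr)$. Regrouping the defining sum by cycles gives, for every index set $J\subseteq\{1,\dots,r\}$, the identity $\langle c,\prod_{j\in J}z_j\rangle=\gamma+\sum_{j\in J}\delta_j$; taking $J=\{1,\dots,r\}$ together with $\langle c,g\rangle=\gamma$ yields $\sum_j\delta_j=0$. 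The key point is that the complementary subelement $gh^{-1}=\prod_{j\notin I}z_j$ also lies in $G$, because $G$ is a group. Applying maximality of $\gamma$ to the two group elements $h$ and $gh^{-1}$ gives $\sum_{j\in I}\delta_j\le 0$ and $\sum_{j\notin I}\delta_j\le 0$; with $\sum_j\delta_j=0$ these force $\sum_{j\in I}\delta_j=0$, i.e.\ $\langle c,h\rangle=\gamma$. Thus $h$ lies in every face of $P(G)$ containing $e$ and $g$, hence in $F_g$.

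Combining the two steps identifies the vertex set of $F_g$ with the set of subelements of $g$ in $G$. The final assertion is then immediate: $e$ and $g$ span an edge of $P(G)$ exactly when $F_g$ is one-dimensional, i.e.\ has exactly the two vertices $e$ and $g$, which by the above means that $e$ and $g$ are the only subelements of $g$ in $G$---precisely the condition that $g$ be indecomposable in $G$. I expect the converse inclusion to be the main obstacle: knowing $h\in G$ is too weak on its own, and the argument closes only because $gh^{-1}$ is forced into $G$ as well, which is what lets the single equality $\langle c,e\rangle=\langle c,g\rangle$ be split by the two maximality inequalities coming from $h$ and from $gh^{-1}$.
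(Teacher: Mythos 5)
The paper does not prove this statement itself---it is quoted from Guralnick and Perkinson \cite[Thm.\ 3.5]{GP05}---so there is no internal proof to compare against; your argument is correct and is essentially the standard one from that source. Both halves check out: the functionals $-x_{ij}$ for $j\notin\{i,g(i)\}$ correctly force every vertex of $F_g$ to satisfy $h(i)\in\{i,g(i)\}$ and hence to be a subelement, and in the converse direction you correctly identify the crux, namely that $gh^{-1}=\prod_{j\notin I}z_j$ also lies in $G$, so that $\langle c,h\rangle+\langle c,gh^{-1}\rangle=\langle c,e\rangle+\langle c,g\rangle$ combined with the two maximality inequalities forces $\langle c,h\rangle=\langle c,e\rangle$.
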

The {\em degree} of a vertex of a polytope is the number of edges it is contained in. 
Since $G$ acts transitively on the vertices of $P(G)$ each vertex has the same degree.
\begin{corollary}\label{cor:degreecoro}
The number of indecomposable elements (different from $e$) in $G$ equals the degree of any vertex of $P(G)$.
\end{corollary}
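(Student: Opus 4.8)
The plan is to combine the two results quoted immediately above, namely Theorem~\ref{smallfacetheo} and the transitivity remark preceding the corollary. First I would fix a vertex of $P(G)$; by transitivity of the $G$-action on $\prmat(G)$ it suffices to compute the degree of the vertex corresponding to the identity $e$, since all vertices have the same degree. The degree of $e$ is by definition the number of edges of $P(G)$ incident to $e$.

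Next I would appeal to Theorem~\ref{smallfacetheo}: for $e \neq g \in G$, the pair $\{e,g\}$ spans an edge of $P(G)$ precisely when $g$ is indecomposable in $G$. Moreover, distinct group elements give distinct vertices, and an edge through $e$ has exactly one other endpoint, so the edges incident to $e$ are in bijection with the set $\{g \in G : g \neq e,\ g \text{ indecomposable in } G\}$. Counting this set yields the degree of $e$, hence of every vertex, which is exactly the assertion of Corollary~\ref{cor:degreecoro}.

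I do not anticipate a genuine obstacle here: the statement is essentially a bookkeeping consequence of the edge characterization together with vertex-transitivity. The only point that deserves a word of care is that every edge incident to $e$ is of the form $F_g$ for a unique $g$ — this follows because an edge is a one-dimensional face with two vertices, one of which is $e$, and the other vertex is some $\prmat(g)$ with $g \neq e$; then $F_g$, being the smallest face containing $e$ and $g$, coincides with that edge. So the correspondence edge $\leftrightarrow$ indecomposable element is genuinely a bijection, and the count follows.
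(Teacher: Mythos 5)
Your argument is correct and is exactly the reasoning the paper intends: the corollary is stated without proof precisely because it follows from Theorem~\ref{smallfacetheo} (edges through $e$ correspond to indecomposable $g \neq e$) together with the vertex-transitivity remark immediately preceding it. Your extra care about the bijection between edges at $e$ and indecomposable elements is a welcome but routine verification.
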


\subsection{Products}

For the purpose of reference, let us cite the following result
concerning products of permutation polytopes. Here, the support
$\supp(H)$ of a permutation group $H \le S_n$ is the
complement in $[n]$ of the set of fixed points of $H$.

\begin{theorem}[{Baumeister et~al.~\cite[3.5]{BHNP}}]\label{product}
$P(G)$ is a combinatorial product of two polytopes $P_1$ and $P_2$ 
if and only if there are subgroups $H_1$ and $H_2$ in $G$ such that 
\begin{enumerate}
\item $P(H_i)$ is combinatorially equivalent to $P_i$ for $i = 1,2$.
\item $\supp(H_1) \cap \supp(H_2) = \emptyset$
\item $G = H_1 \times H_2$.
\end{enumerate}
\end{theorem}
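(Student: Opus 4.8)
The statement is an equivalence, so I would prove the two implications separately; the ``if'' direction is routine and the ``only if'' direction carries all of the content.

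\emph{The ``if'' direction.} Assume $H_1,H_2\le G$ satisfy (1)--(3). Reorder $[n]$ so that $\supp(H_1)$, $\supp(H_2)$ and $[n]\setminus(\supp(H_1)\cup\supp(H_2))$ occupy three consecutive blocks of coordinates. Because $\supp(H_1)\cap\supp(H_2)=\emptyset$, every $h_1\in H_1$ fixes $\supp(H_2)$ pointwise and conversely, so for $g=h_1h_2\in G=H_1\times H_2$ the matrix $\prmat(g)$ is block diagonal, with blocks the restriction of $\prmat(h_1)$ to the first block, the restriction of $\prmat(h_2)$ to the second, and an identity block. Hence $g\mapsto\prmat(g)$ takes values in a sum of two complementary coordinate subspaces plus a fixed vector, so $P(G)=\conv\prmat(G)$ is the affine --- in particular combinatorial --- product of $\conv\prmat(H_1)=P(H_1)$ and $\conv\prmat(H_2)=P(H_2)$, which are combinatorially $P_1$ and $P_2$.

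\emph{The ``only if'' direction.} Fix a combinatorial isomorphism $P(G)\cong P_1\times P_2$; we may assume $\dim P_1,\dim P_2\ge 1$ (if one factor is a point, take it to correspond to $\{e\}$ and the other to $G$). Let $e$ correspond to the vertex $(v_1^0,v_2^0)$. The faces $P_1\times\{v_2^0\}$ and $\{v_1^0\}\times P_2$ of $P_1\times P_2$ give faces $A$ and $B$ of $P(G)$ with $A\cap B=\{e\}$ and $\dim A+\dim B=\dim P(G)$, and since a face equals the convex hull of its vertices we have $A=P(H_1)$ and $B=P(H_2)$, where $H_1:=\{g\in G:\prmat(g)\in A\}$ and $H_2:=\{g\in G:\prmat(g)\in B\}$; moreover $P(H_1)\cong P_1$ and $P(H_2)\cong P_2$. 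Next I would show $H_1,H_2$ are subgroups: left translation $\ell_g\colon x\mapsto\prmat(g)x$ is a combinatorial automorphism of $P(G)$, and --- using standard facts about automorphisms of products of polytopes to see that the $\ell_g$ permute the pencil of faces ``parallel'' to $A$ and likewise the one parallel to $B$, after grouping the prime factors of $P(G)$ suitably --- the stabiliser $\{g:\ell_g(A)=A\}$ is exactly $H_1$, which is therefore a subgroup, and similarly for $H_2$. Since $A\cap B=\{e\}$ gives $H_1\cap H_2=\{e\}$ and $|H_1|\,|H_2|=|V(P_1)|\,|V(P_2)|=|V(P(G))|=|G|$, we get $G=H_1H_2$.

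\emph{Disjoint supports is the main obstacle.} It remains to prove $\supp(H_1)\cap\supp(H_2)=\emptyset$; once this is known, $H_1$ and $H_2$ commute elementwise, so $G=H_1H_2$ is the internal direct product $H_1\times H_2$ and conditions (1)--(3) hold. This last step is the crux, and it cannot be reached by group theory alone: the Klein four-group acting regularly on four points is $H_1\times H_2$ with $|H_i|=2$ and overlapping supports (there $P(G)$ is a tetrahedron, correctly \emph{not} a product). Nor do the dimension (Theorem~\ref{thm:dimension}) and the vertex set suffice --- analysing affine hulls only yields that every irreducible constituent $(\alpha,\beta)$ of the permutation representation of $H_1\times H_2$ has $\min(\degree\alpha,\degree\beta)=1$. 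One must exploit the full combinatorial product structure of the face lattice. My plan is an argument by contradiction: if a point $p$ is moved by some $h_1\in H_1$ and some $h_2\in H_2$, then for $g=h_1h_2$ the cycle of $g$ through $p$ amalgamates points from $\supp(H_1)$ and $\supp(H_2)$, whereas in $P_1\times P_2$ the face $F_g$ must split as $F^1\times F^2$, so by Theorem~\ref{smallfacetheo} the number of subelements of $g$ in $G$ equals the product of the numbers of subelements of $h_1$ in $H_1$ and of $h_2$ in $H_2$; choosing $h_1,h_2$ so that this $g$-cycle is strictly shorter than the corresponding cycles of $h_1$ and of $h_2$ separately makes $g$ have strictly fewer subelements than that product, a contradiction. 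Establishing that such a choice exists for an \emph{arbitrary} pair of overlapping subgroups --- not merely in the transparent case where $h_1,h_2$ can be taken to be transpositions --- is the technical heart of the proof.
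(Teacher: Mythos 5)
First, a point of reference: this paper does not prove the statement at all --- it is imported verbatim from \cite[Thm.~3.5]{BHNP} --- so your attempt has to be measured against the proof given there and on its own merits. Your ``if'' direction is correct and is the standard block-diagonal argument. Your ``only if'' architecture (take the two factor faces $A$ and $B$ through $e$, let $H_1,H_2$ be their vertex sets, show they are subgroups via the left-translation automorphisms, deduce $G=H_1H_2$ with $H_1\cap H_2=\{e\}$) is also the right skeleton. But the proof is not complete: you explicitly leave the disjoint-support condition (2) as an unproved ``technical heart'', and --- as your own Klein-four example demonstrates --- that condition is exactly where the content of the theorem sits. A secondary soft spot is the subgroup step: ``standard facts about automorphisms of products'' means the unique prime factorization of polytopes under the combinatorial product and the induced permutation action of automorphisms on prime factors; to conclude that $\ell_g$ stabilizes the pencil of faces parallel to $A$ you must also check that the translations respect the given grouping of prime factors into $P_1$ and $P_2$ (delicate when $P_1$ and $P_2$ share isomorphic prime factors), and you need the further statement that $\ell_{h_1}$ carries the $A$-parallel face through $h_2$ to the one through $h_1h_2$, so that the vertex of $P(G)$ lying over $(w_1,w_2)$ is precisely $h_1h_2$. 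Without that identification your later appeal to the face $F_g$ ``splitting as $F^1\times F^2$'' has nothing to bite on.

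The good news is that once the identification $h_1h_2\leftrightarrow(w_1,w_2)$ is in place, the missing step does not require the strict-inequality count of subelements that you could not push through. The smallest face of $P_1\times P_2$ containing $(v_1^0,v_2^0)$ and $(w_1,w_2)$ is $F^1\times F^2$ and hence contains the vertices $(w_1,v_2^0)$ and $(v_1^0,w_2)$; by Theorem~\ref{smallfacetheo} this says that $h_1$ and $h_2$ are both subelements of $g:=h_1h_2$. Writing $g=z_1\circ\cdots\circ z_r$, $h_1=\prod_{i\in I}z_i$ and $h_2=\prod_{j\in J}z_j$, the equation $h_1h_2=g$ forces $I\cap J=\emptyset$ (on the support of a cycle indexed by $I\cap J$ the product would act as $z_i^2\neq z_i$), whence $\supp(h_1)\cap\supp(h_2)=\emptyset$ for \emph{every} pair $h_1\in H_1$, $h_2\in H_2$, which is condition (2) and also yields elementwise commutation, hence $G=H_1\times H_2$. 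This is how the argument in \cite{BHNP} closes; your plan, by contrast, would still oblige you to prove that overlapping supports always \emph{strictly} decrease the number of subelements of $h_1h_2$ below the product of the counts for $h_1$ and $h_2$ --- a statement you neither prove nor need.
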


\section{Dimension and vertex degree}

\subsection{Our setting}

In this section, we give formulas for the dimension and the vertex
degree of cyclic permutation polytopes in terms of the cycle type of
the generator permutation.
Let $G = \group g$, where $g$ has a disjoint cycle decomposition 
into $t$ cycles of lengths $\ell_1, \ldots, \ell_t$. 
In this case, we set
\[d := \card{G} = o(g) = \lcm(\ell_1, \ldots, \ell_t),\]
so $G = \{e,g,\ldots, g^{d-1}\}$. 

\subsection{Dimension formula}

In our setting, we can explicitly determine the dimension of $P(G)$.
\begin{prop}\label{dimsimplex}
Let $G=\group{g}$ be a cyclic permutation group of order $d$, where
$g$ has $t$ disjoint cycles of lengths $\ell_1, \ldots, \ell_t$.
We have two ways to compute the dimension of $P(G)$:
\begin{itemize}
 \item[(1)] $\dim(P(G))$ equals the number of ${\ell_i}^{th}$-roots of
   unity, $i \in [t]$, which are different from $1$, i.e.
\[\dim(P(G))~ = \left|\left\{x \in \C : x \neq 1~\mbox{and}~x^{\ell_i}
    = 1~\mbox{for some}~i \in [t] \right\}\right|.\]
\item[(2)] 
$\dim(P(G))$ equals the number of elements in $[d-1]$ which are
divisible by $\frac{d}{\ell_i}$ for some $i \in [t]$. 
\end{itemize}
In particular, 
\[\max(\ell_i-1 \,:\, i \in [t]) \leq \dim(P(G)) \leq d-1.\]
\end{prop}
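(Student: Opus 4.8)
The plan is to apply the Guralnick--Perkinson dimension formula (Theorem~\ref{thm:dimension}) together with the fact that $G$, being cyclic, is abelian, so every irreducible $\C$-representation of $G$ has degree $1$. Writing $\rho$ for the permutation representation of $G$ on $[n]$, the formula then collapses to
\[\dim P(G) \;=\; \bigl|\Irr(\rho)\setminus\{1_G\}\bigr|,\]
so it suffices to identify which non-trivial irreducible characters of $G$ occur in $\rho$.

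To do this I would identify $\Irr(G)$ with the group $\mu_d$ of $d$-th roots of unity via $\xi\mapsto\chi_\xi$, where $\chi_\xi(g)=\xi$. Decompose $\rho=\rho_1\oplus\cdots\oplus\rho_t$ according to the $t$ cycles of $g$, with $\rho_i$ the action of $G$ on the coordinates indexed by the $i$-th cycle. Since $g$ acts on that cycle as a single $\ell_i$-cycle, the element $g^{\ell_i}$ fixes it pointwise, so $\rho_i$ is the inflation along the quotient map $G\twoheadrightarrow G/\group{g^{\ell_i}}\cong\Z_{\ell_i}$ of the regular representation of $\Z_{\ell_i}$. Its irreducible constituents are therefore precisely the $\chi_\xi$ with $\xi^{\ell_i}=1$, each occurring once. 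Taking the union over $i\in[t]$ (and noting that $\ell_i\mid d$ forces every $\ell_i$-th root of unity into $\mu_d$) gives
\[\Irr(\rho)\;=\;\bigl\{\chi_\xi \;:\; \xi^{\ell_i}=1 \text{ for some } i\in[t]\bigr\},\]
and deleting the trivial character $1_G=\chi_1$ yields formula~(1).

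Formula~(2) follows by passing to exponents: fix a primitive $d$-th root of unity $\zeta$ and write $\xi=\zeta^k$ with $k\in[[d]]$; then $\xi^{\ell_i}=1$ iff $d\mid k\ell_i$, which, since $\ell_i\mid d$, is equivalent to $\frac{d}{\ell_i}\mid k$, and $k=0$ corresponds to the trivial character. For the bounds, $\Irr(\rho)\setminus\{1_G\}$ is contained in the set of $d-1$ non-trivial characters of $G$, giving $\dim P(G)\le d-1$; and for each fixed $i$ the $\ell_i$ characters $\chi_\xi$ with $\xi^{\ell_i}=1$ are pairwise distinct and all lie in $\Irr(\rho)$, so removing $1_G$ shows $\dim P(G)\ge\ell_i-1$ for every $i$, hence $\dim P(G)\ge\max_i(\ell_i-1)$.

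I expect no essential difficulty; the one point requiring a little care is the identification of $\rho_i$ with the inflated regular representation of $\Z_{\ell_i}$ and the resulting bookkeeping of roots of unity -- in particular that the union over the cycles need not be disjoint (e.g.\ $\ell_1=2$, $\ell_2=4$), which is exactly why $\dim P(G)$ is the cardinality of a union and not a sum of the $\ell_i-1$.
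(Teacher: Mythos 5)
Your proposal is correct and follows essentially the same route as the paper: both invoke Theorem~\ref{thm:dimension}, use that a cyclic group has only $1$-dimensional complex irreducibles so the dimension is the number of non-trivial irreducible factors, identify these with the $\ell_i$-th roots of unity, and pass to exponents modulo $d$ for part~(2). You merely spell out in more detail the step the paper states in one line (that each cycle block is the inflated regular representation of $\Z_{\ell_i}$, so its constituents are exactly the characters killed by $\ell_i$), which is a welcome clarification but not a different argument.
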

\begin{proof}
We consider the permutation representation of $G$ over $\C$. As $G$ is cyclic, it splits over $\C$ in $1$-dimensional
representations. The eigenvalues of $G$ are then the $\ell_i$-th roots of unity. In order to determine the number
of non-isomorphic non-trivial irreducible representations it suffices to count the different non-trivial $\ell_i$-th roots of unity,
which shows (1). Part (2) follows from Theorem~\ref{thm:dimension} and by observing that the subgroup $\{x \in \C \;:\: x^d = 1\}$ 
is generated by a primitive $d$'th root of unity.
\end{proof}
For instance, for $\ell_1=2,\ell_2=4,\ell_3=8$ we get $\dim(P(G)) =7$, cf. Corollary~\ref{simplexkrit}. 
Note that the first criterion is more conceptual, while the second one is easier to implement. 
Using the inclusion-exclusion-formula we obtain a closed formula for the dimension:
\begin{corollary}\label{formula}
Under the assumptions of the proposition we have:
\[\dim(P(G)) = -1 + \sum_{\emptyset \not= I \subseteq [t]} (-1)^{\card{I}+1} \gcd(\ell_i \,:\, i \in I).\]
\end{corollary}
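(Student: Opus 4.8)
The plan is to combine part~(2) of Proposition~\ref{dimsimplex} with a standard inclusion--exclusion count. By part~(2), $\dim(P(G))$ equals the number of integers $m \in [d-1]$ such that $\frac{d}{\ell_i} \divides m$ for at least one $i \in [t]$. For $i \in [t]$ let $A_i := \{ m \in [d] : \frac{d}{\ell_i} \divides m\}$; since $\frac{d}{\ell_i} \divides d$, the element $d$ itself lies in every $A_i$, so removing it,
\[
\dim(P(G)) = \left| \bigcup_{i \in [t]} A_i \right| - 1.
\]
First I would apply the inclusion--exclusion formula to $\left|\bigcup_{i\in[t]} A_i\right|$, which gives $\sum_{\emptyset \neq I \subseteq [t]} (-1)^{\card{I}+1} \left| \bigcap_{i \in I} A_i \right|$.

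The key computation is then to identify $\left|\bigcap_{i \in I} A_i\right|$ for a nonempty $I \subseteq [t]$. An integer $m$ lies in this intersection exactly when $\frac{d}{\ell_i} \divides m$ for every $i \in I$, i.e.\ when $\lcm\!\left(\frac{d}{\ell_i} : i \in I\right) \divides m$. Within the range $[d]$ the number of such multiples is $d \big/ \lcm\!\left(\frac{d}{\ell_i} : i \in I\right)$. So the remaining step is the elementary number-theoretic identity
\[
\frac{d}{\lcm\!\left(\frac{d}{\ell_i} : i \in I\right)} = \gcd(\ell_i : i \in I),
\]
which holds because $d = \lcm(\ell_1,\dots,\ell_t)$ is in particular a common multiple of all $\ell_i$, $i \in I$, so that dividing $d$ by each $\ell_i$ and taking the lcm of the quotients is dual to taking the gcd of the $\ell_i$ — this is the familiar fact that $n \mapsto n/x$ is an order-reversing bijection on the divisor lattice of $n$ that swaps $\gcd$ and $\lcm$, applied with $n$ any common multiple of the $\ell_i$ (one checks it prime-by-prime on $p$-adic valuations). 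Substituting this into the inclusion--exclusion sum and subtracting the $1$ coming from the element $d$ yields exactly
\[
\dim(P(G)) = -1 + \sum_{\emptyset \neq I \subseteq [t]} (-1)^{\card{I}+1}\, \gcd(\ell_i : i \in I).
\]

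The only mild subtlety — the part I would be most careful about — is the bookkeeping of the "off by one" at the top of the range: making sure that working in $[d]$ rather than $[d-1]$ is harmless because $d \in A_i$ for all $i$, and hence $d \in \bigcap_{i\in I} A_i$ for all nonempty $I$, so the single correction term $-1$ suffices and is not scattered across the inclusion--exclusion sum. Everything else is routine: inclusion--exclusion is standard, and the gcd/lcm duality is elementary. I would present the argument in the three steps above — reduce via part~(2), apply inclusion--exclusion, evaluate the intersections using the valuation identity — and keep the $p$-adic verification of the identity to a single sentence.
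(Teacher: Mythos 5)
Your proof is correct and takes essentially the same route as the paper: Proposition~\ref{dimsimplex}(2), inclusion--exclusion, and the identity $d/\lcm(\frac{d}{\ell_i} : i \in I) = \gcd(\ell_i : i \in I)$. The only cosmetic difference is that you count within $[d]$ and subtract the single element $d$ once at the end, whereas the paper counts within $[d-1]$ so that each intersection contributes $\gcd(\ell_i : i \in I) - 1$ and the scattered $-1$'s are collapsed by the binomial identity $\sum_{a=1}^t \binom{t}{a}(-1)^a = -1$.
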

\begin{proof}
Let $\emptyset \not= I \subseteq [t]$. Applying the inclusion-exclusion formula to Proposition~\ref{dimsimplex}(2) 
we only have to observe that the set 
\begin{align*}
  \{k \in [d-1] \,:\, \lcm(\frac{d}{\ell_i} \,:\, i \in I) \,|\, k\}
\end{align*}
has cardinality $\gcd(\ell_i \,:\, i \in I) - 1$. Note that 
\begin{align*}
  \sum_{\emptyset \not= I \subseteq [t]} (-1)^{\card{I}+1} (-1) &=
  \sum_{a=1}^t \binom{t}{a} (-1)^a = -1.\qedhere
\end{align*}
\end{proof}
Since a polytope is a simplex if and only if $\dim(P(G))+1$ equals the number of vertices (here, $|G|=d$) we get from 
Proposition~\ref{dimsimplex} the following criterion (cf. \cite{GP05}). 
Let us define the {\em unimodular $m$-simplex} $\Delta_m$ as the convex hull of the standard basis vectors 
in $\R^{m+1}$.
\begin{corollary}\label{simplexkrit}
Let $G$ be cyclic with $\card{G} = d$. Then $P(G)$ is a simplex if and
only if $G$ has a cycle of order $d$.
In this case, there is an isomorphism $\Z^{n^2} \cap \aff(P(G)) \to
\Z^d \cap \aff(\Delta_{d-1})$ mapping the vertices of $P(G)$ onto the
vertices of $\Delta_{d-1}$. In other words, $P(G)$ is a unimodular
simplex up to lattice isomorphisms.
In particular this holds, if $\card{G}$ is a prime power.
\end{corollary}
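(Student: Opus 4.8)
The plan is to prove the two equivalent statements in Corollary~\ref{simplexkrit} and then the ``in particular'' about prime powers. The first claim---that $P(G)$ is a simplex if and only if $g$ has a cycle of order $d$---should follow directly from counting. Since $P(G)$ has exactly $d = \card{G}$ vertices, it is a simplex precisely when $\dim(P(G)) = d-1$. By Proposition~\ref{dimsimplex}, part (1) or (2), $\dim(P(G)) = d-1$ is equivalent to the statement that \emph{every} element of $[d-1]$ is divisible by $d/\ell_i$ for some $i$; in particular, taking $k=1$, this forces $d/\ell_i = 1$, i.e.\ $\ell_i = d$, for some $i$. Conversely, if some $\ell_i = d$, then $d/\ell_i = 1$ divides all of $[d-1]$, so the dimension is $d-1$. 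So the first equivalence is essentially immediate from the preceding proposition.

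For the lattice-isomorphism statement, suppose $\ell_i = d$ for some $i$, so the cycle $z_i$ of $g$ has length $d$. First I would reduce, via the dimension formula and Theorem~\ref{thm:stablyEquivalence} (stable equivalence), to the case where $g$ itself \emph{is} a single $d$-cycle: the permutation representation of $G=\group g$ and the permutation representation of $\group{z_i}$ contain the same non-trivial irreducible $\C$-factors (both sets are exactly the nontrivial $d$th roots of unity, since $\ell_i = d$ is a multiple of every other $\ell_j$), hence by Theorem~\ref{thm:stablyEquivalence} the polytopes $P(G)$ and $P(\group{z_i})$ are affinely equivalent. \emph{Caution:} affine equivalence is weaker than lattice equivalence, so this reduction only works cleanly if I instead argue directly about the lattice. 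The cleaner route is to observe that $P(\group g)$ already sits inside $\Mat_n(\Z)$ and to exhibit the affine isomorphism explicitly: the $d$ permutation matrices $I, M, M^2, \dots, M^{d-1}$ (where $M = \prmat(g)$) are affinely independent (they span a $(d-1)$-dimensional space), so the map sending $M^k$ to the $k$-th standard basis vector $e_{k+1} \in \R^d$ extends uniquely to an affine isomorphism $\aff(P(G)) \to \aff(\Delta_{d-1})$.

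The main obstacle is showing this affine isomorphism is a \emph{lattice} isomorphism, i.e.\ that it carries $\Z^{n^2} \cap \aff(P(G))$ onto $\Z^d \cap \aff(\Delta_{d-1})$. For this I would check that the $d$ integer points $I, M, \dots, M^{d-1}$ form an affine lattice basis of $\Z^{n^2} \cap \aff(P(G))$---equivalently, that they generate that affine lattice over $\Z$, not merely over $\Q$. Here I would use the presence of a full $d$-cycle $z_i$: restricting attention to the $d \times d$ block of the matrices indexed by the support of $z_i$, the matrices $M^k$ restrict to the cyclic shift matrices $C^k$, and the differences $M^k - I$ project to $C^k - C^0$; one checks that the $\Z$-span of $\{C^k - I : 1 \le k \le d-1\}$ has index $1$ in its saturation inside $\Mat_d(\Z)$, for instance by a direct determinant/Smith-normal-form computation on the circulant structure (the relevant change of basis is governed by the Vandermonde matrix of $d$th roots of unity, whose only ``defect'' is the trivial character, which is killed by passing to differences). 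This makes the map unimodular. Finally, the ``in particular'' is immediate: if $\card{G} = d = p^a$ is a prime power, then since $d = \lcm(\ell_1,\dots,\ell_t)$ and each $\ell_i$ is a power of $p$, the maximum $\ell_i$ equals $d$, so $g$ has a cycle of order $d$ and the simplex criterion applies.
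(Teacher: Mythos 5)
Your argument is correct, and the first equivalence and the prime-power remark match the paper exactly. For the lattice statement, however, the paper takes a shortcut that you narrowly miss: instead of restricting to the whole $d\times d$ block of the $d$-cycle, it projects onto just the \emph{first row} of that block (assuming the cycle is $(1\cdots d)$), so that $\prmat(g^{i-1})\mapsto e_i$. Then both directions are manifestly integral --- the projection because it is a coordinate projection, the inverse because $e_1,\dots,e_d$ form an affine lattice basis of $\Z^d\cap\aff(\Delta_{d-1})$ and are sent to the integral matrices $\prmat(g^{i-1})$ --- and the lattice isomorphism is immediate, with no saturation computation at all. Your route via the full block works too, but it forces you to verify that the $\Z$-span of $\{C^k-I\}$ is saturated; that claim is true, and the clean way to see it is that a circulant matrix is determined by its first row, so the question reduces to the first rows $e_k-e_0$, which $\Z$-span the saturated root lattice $\{a\in\Z^d:\sum a_i=0\}$. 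Be careful with your parenthetical justification, though: the Vandermonde matrix of $d$th roots of unity is not unimodular (its determinant has absolute value $d^{d/2}$), so the discrete Fourier transform does not directly certify the index-one claim; if you keep your version, replace that remark with the first-row argument or an honest Smith normal form computation. Your instinct to discard the stable-equivalence reduction because it only yields affine (not lattice) equivalence was the right call.
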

\begin{proof}
We observe that $\dim(P(G)) = d-1$ if and only if $1$ is in the set
given in the Proposition~\ref{dimsimplex}(2), or, equivalently, if and only if 
there is an $i \in [t]$ such that $d/\ell_i = 1$. 

For the additional statement, we may assume that the first cycle in
the cycle decomposition of $g$ is of the form $(1 \cdots d)$.
Let $e_1, \ldots, e_d$ be the canonical basis of $\Z^d$. Then by
projecting onto the first $d$ coordinates of the first row
the permutation matrix $\prmat(g^{i-1})$ gets mapped to $e_i$ (for
$i \in [d]$). Since the inverse map given by mapping $e_i$ to
$\prmat(g^{i-1})$ for $i = 1, \ldots, d$ is affine and integral, this
yields a lattice isomorphism $P(G) \cap \Z^{n^2} \to \aff(e_1, \ldots,
e_d) \cap \Z^d$.
In particular, $P(G)$ is isomorphic to the convex hull of the $d$
canonical basis vectors of $\R^d$, a unimodular
simplex.
\end{proof}

In particular, since Ehrhart polynomials and volume of unimodular simplices are well-known, this gives an immediate proof 
of Theorem 1.2(1) and Lemma 3.1 in \cite{BDO11}. 

Notice that Corollary~\ref{simplexkrit} also follows directly from Corollary~2.8 of [BHNP]: 
According to that corollary $P(G)$ is a simplex if and only if the
permutation representation of $G$ contains every irreducible complex
representation of $G$. Hence, if $P(G)$ is a simplex, then  all the
$d$'th roots of unity are eigenvalues of this representation. This
implies that $G$ has a cycle of order $d$.

Here is another special situation, which is a generalization of \ref{simplexkrit}.

\begin{prop}\label{disjoint}
Let $G = \group g \leq S_n$ be a cyclic permutation group where the orders $\ell_1, \ldots, \ell_t$ of the disjoint cycles of $g$ are pairwise coprime. 
Then $P(G)$ is a product of unimodular simplices of dimensions $\ell_1 - 1, \ldots, \ell_t - 1$.
\end{prop}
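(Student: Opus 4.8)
The plan is to realize $G$ as an internal direct product of cyclic groups sitting on pairwise disjoint supports, and then to combine Corollary~\ref{simplexkrit} with Theorem~\ref{product}.

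First I would extract the arithmetic. Since $\gcd(\ell_i,\ell_j)=1$ for $i\neq j$, we have $d=\lcm(\ell_1,\dots,\ell_t)=\ell_1\cdots\ell_t$, and the Chinese Remainder Theorem gives $G\cong\Z_{\ell_1}\times\cdots\times\Z_{\ell_t}$. To make this internal, write $g=z_1\circ\cdots\circ z_t$ for the disjoint cycle decomposition and set $G_i:=\group{g^{d/\ell_i}}$. I claim $G_i=\group{z_i}$: because $\ell_j\divides d/\ell_i$ for every $j\neq i$, the element $g^{d/\ell_i}$ fixes $\supp(z_j)$ pointwise for all $j\neq i$, while on $\supp(z_i)$ it acts as $z_i^{d/\ell_i}$, which generates $\group{z_i}$ since $\gcd(d/\ell_i,\ell_i)=1$. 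Hence $G_i=\group{z_i}$ has order $\ell_i$ and support exactly $\supp(z_i)$, and $G=G_1\times\cdots\times G_t$ as an internal direct product.

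Next, each $G_i$ is cyclic of order $\ell_i$ generated by a single $\ell_i$-cycle, so Corollary~\ref{simplexkrit} shows that $P(G_i)$ is, up to lattice isomorphism, the unimodular simplex $\Delta_{\ell_i-1}$; and the supports $\supp(G_i)=\supp(z_i)$ are pairwise disjoint by the definition of disjoint cycle decomposition. Now I would invoke Theorem~\ref{product}: for $t=2$ take $H_1=G_1$, $H_2=G_2$, and all three conditions hold by the preceding observations, so $P(G)$ is a combinatorial product of $P(G_1)$ and $P(G_2)$. For general $t$ I induct, splitting $G=G_1\times(G_2\times\cdots\times G_t)$; the support of $G_2\times\cdots\times G_t$ is $\bigcup_{i\ge 2}\supp(z_i)$, disjoint from $\supp(z_1)$, and by the inductive hypothesis $P(G_2\times\cdots\times G_t)$ is the asserted product of unimodular simplices. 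Thus $P(G)$ is $P(G_1)\times\cdots\times P(G_t)=\Delta_{\ell_1-1}\times\cdots\times\Delta_{\ell_t-1}$.

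The step I expect to need the most care is upgrading the \emph{combinatorial} product supplied by Theorem~\ref{product} to the genuine lattice-level statement that $P(G)$ \emph{is} a product of unimodular simplices. I would handle this by a direct argument from disjoint supports: if $H_1,\dots,H_t\le S_n$ have pairwise disjoint supports, then for $h_i\in H_i$ the permutation matrix $\prmat(h_1\cdots h_t)$ is constant (equal to the identity pattern) outside the block of rows and columns indexed by $\bigcup_i\supp(H_i)$, and on that block it is block diagonal with diagonal blocks $\prmat(h_i)$ restricted to $\supp(H_i)$. Consequently $(h_1,\dots,h_t)\mapsto\prmat(h_1\cdots h_t)$ extends to an affine isomorphism $\prod_i\aff(P(H_i))\to\aff(P(G))$ that carries $\prod_i P(H_i)$ onto $P(G)$ and is compatible with the ambient lattices; together with the unimodularity in Corollary~\ref{simplexkrit} this gives the claim. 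As a sanity check one notes that $\sum_i(\ell_i-1)$ agrees with the dimension of $P(G)$ computed from Proposition~\ref{dimsimplex} or Corollary~\ref{formula}.
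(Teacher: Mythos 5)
Your proof is correct and follows essentially the same route as the paper's: the Chinese Remainder Theorem realizes $G$ as a direct product of cyclic groups with pairwise disjoint supports, each factor gives a unimodular simplex by Corollary~\ref{simplexkrit}, and Theorem~\ref{product} assembles the product. You simply supply more detail than the paper does, in particular the explicit identification $G_i=\group{g^{d/\ell_i}}=\group{z_i}$ and the remark upgrading the combinatorial product to a lattice-level one, both of which are welcome but not a different approach.
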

\begin{proof}
The Chinese remainder theorem implies that $G$ is isomorphic to the product of cyclic permutation groups (with disjoint support) of orders $\ell_1, \ldots, \ell_t$. From the previous corollary and Theorem~\ref{product} the statement follows.
\end{proof}

\subsection{The number of indecomposable elements}

We will give a criterion to determine whether $z \in G  = \langle g \rangle = \{e, g, \ldots, g^{d-1}\}$ is decomposable or not. 
Let $g = z_1 \circ \cdots \circ z_t$ be the cycle decomposition into $t$ cycles of 
lengths $\ell_1, \ldots, \ell_t$. For $I \subset [t]$ we set $$I^c := [t]\backslash I, \text{ and } d_I:= \lcm(\ell_i~|~i \in I).$$
\begin{prop}\label{2criterion decomp}\label{2crit}
Let $ g^k$ ($k \in \{0, \ldots,  d-1\}$) be an element in the cyclic group $G = \group g \leq S_n$. 
Then $g^k$ is decomposable if and only if there is a proper non-empty subset $I$ of $[t]$ such that 
\begin{enumerate}
\item $\gcd(d_I, d_{I^c})$ divides $k$
\item neither $d_I$ nor $d_{I^c}$ divides $k$
\end{enumerate}
\end{prop}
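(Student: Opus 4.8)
The plan is to unravel the definition of decomposability via Theorem~\ref{smallfacetheo} and reduce everything to elementary statements about cyclic subgroups of $\Z/d\Z$. Recall that $g^k$ is decomposable precisely when it has a subelement $h$ in $G$ with $e \neq h \neq g^k$; since $g = z_1\circ\cdots\circ z_t$ and any subelement is of the form $\prod_{i\in I} z_i$ for some $I\subseteq[t]$, the first task is to identify, for each $I$, which powers $g^k$ have $\prod_{i\in I} z_i$ as a subelement. The key observation is that $g^k$ restricted to the support of the $i$-th cycle is $z_i^k$, so $g^k$ agrees with $\prod_{i\in I} z_i$ exactly on those coordinates where, for $i\in I$, $z_i^k = z_i$ (i.e.\ $k\equiv 1 \pmod{\ell_i}$) and, for $i\notin I$, $z_i^k = e$ (i.e.\ $k\equiv 0\pmod{\ell_i}$). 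Wait --- that is too rigid; $\prod_{i\in I}z_i$ being \emph{a} subelement of $g^k$ means $g^k = \prod_{j\in J} w_j$ where the $w_j$ are the cycles of $g^k$ and some sub-product equals $\prod_{i\in I}z_i$. So first I would prove the cleaner reformulation: $g^k$ is decomposable iff there is a proper nonempty $I\subseteq[t]$ such that $g^k$ agrees with the identity on $\supp(\prod_{i\in I^c} z_i)$ is \emph{not} what we want either. Let me restate: the honest route is to show $g^k$ is decomposable iff there is a proper nonempty partition-piece $I$ for which both $(g^k)|_{I} := g^{k}$ restricted to $\bigcup_{i\in I}\supp(z_i)$ and its complement are themselves powers of $g$; equivalently each of $g^k|_I$ and $g^k|_{I^c}$ lies in $G$ when extended by the identity.

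The technical heart is therefore the following: for a proper nonempty $I\subseteq[t]$, the permutation $h_I$ that acts as $g^k$ on $\bigcup_{i\in I}\supp(z_i)$ and fixes everything else is an element of $G=\group g$ if and only if $\gcd(d_I,d_{I^c})$ divides $k$. I would prove this by computing orders and using the Chinese Remainder Theorem inside $\Z/d\Z$. The element $h_I$ has order $\lcm(\ell_i/\gcd(\ell_i,k) : i\in I)$, which divides $d_I$; and $h_I\in G$ iff $h_I = g^m$ for some $m$, which by looking cycle-by-cycle forces $m\equiv k\pmod{\ell_i}$ for $i\in I$ and $m\equiv 0\pmod{\ell_i}$ for $i\in I^c$. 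By CRT this system is solvable iff $k\equiv 0 \pmod{\gcd(\ell_i,\ell_j)}$ for every $i\in I$, $j\in I^c$, which is exactly $\gcd(d_I,d_{I^c})\divides k$ (using that $\gcd(\lcm_I, \lcm_{I^c})$ equals the lcm over $i\in I, j\in I^c$ of $\gcd(\ell_i,\ell_j)$ --- a standard identity I would state and cite or prove in a line). So: $h_I\in G$ $\Leftrightarrow$ condition (1). Then $h_I = e$ iff $\ell_i \divides k$ for all $i\in I$ iff $d_I\divides k$, and $h_I = g^k$ iff additionally $g^k$ fixes $\supp(\prod_{i\in I^c}z_i)$, i.e.\ $\ell_i\divides k$ for all $i\in I^c$, iff $d_{I^c}\divides k$. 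Combining: $h_I$ is a \emph{proper} nontrivial subelement of $g^k$ iff (1) holds and neither $d_I$ nor $d_{I^c}$ divides $k$, which is exactly condition (2).

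Finally I would assemble the equivalence. If such $I$ exists, $h_I$ witnesses decomposability. Conversely, if $g^k$ is decomposable, pick a subelement $h$ with $e\neq h\neq g^k$; its support, being a union of supports of cycles of $g$, is $\bigcup_{i\in I}\supp(z_i)$ for some $I\subseteq[t]$, and one checks $h=h_I$ necessarily (on each $\supp(z_i)$ with $i\in I$, $h$ must equal $z_i^{k_i}$ for the relevant power, and since $h\preceq g^k$ this power is forced to be $z_i^k$); properness of $h$ forces $I$ to be proper and nonempty, and the two inequalities $h\neq e$, $h\neq g^k$ give condition (2), while $h\in G$ gives condition (1). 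The main obstacle I anticipate is the bookkeeping in the converse direction --- verifying that an arbitrary subelement of $g^k$ with support over $I$ must coincide with the canonical $h_I$ --- together with cleanly proving the arithmetic identity $\gcd(d_I, d_{I^c}) = \lcm\{\gcd(\ell_i,\ell_j): i\in I,\ j\in I^c\}$, which is what translates the CRT solvability condition into the stated divisibility. Everything else is routine manipulation of orders and divisibilities.
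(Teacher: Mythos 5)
Your argument is correct. The overall architecture matches the paper's --- both directions reduce to deciding, for each proper nonempty $I\subseteq[t]$, whether the restriction of $g^k$ to $\bigcup_{i\in I}\supp(z_i)$ (extended by the identity) lies in $G$ --- but the arithmetic core is handled differently. For the ``if'' direction the paper applies B\'ezout's identity to $d_I$ and $d_{I^c}$, writes $k$ as $(q_1+q_2)d+r_1 d_I + r_2 d_{I^c}$ with $0\le r_1<d/d_I$ and $0\le r_2<d/d_{I^c}$, and exhibits the factorization $g^k=g^{r_1 d_I}g^{r_2 d_{I^c}}$ into nontrivial elements of disjoint support; you instead characterize when your canonical subelement $h_I$ lies in $G$ via the solvability criterion for simultaneous congruences, which costs you the auxiliary identity $\gcd(d_I,d_{I^c})=\lcm\{\gcd(\ell_i,\ell_j)\,:\,i\in I,\ j\in I^c\}$ --- true, and checkable prime by prime by interchanging $\min$ and $\max$ of exponents, but it does need to be written down. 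The ``only if'' directions are essentially the same: both rest on the fact that a nontrivial power of a single $\ell_i$-cycle has no fixed points in its support (equivalently, every element of $\group{z_i}$ is indecomposable, which the paper extracts from Corollary~\ref{simplexkrit} and Theorem~\ref{smallfacetheo}); you should make this step explicit, since a subelement of $g^k$ a priori has support equal to a union of supports of cycles of $g^k$, and it is only membership in $G=\group{g}$ that forces that union to be a union of full supports of cycles of $g$. What your route buys is slightly more information --- it identifies \emph{all} subelements of $g^k$ lying in $G$ as the $h_I$ with $\gcd(d_I,d_{I^c})\divides k$, hence via Theorem~\ref{smallfacetheo} the entire vertex set of the face $F_{g^k}$ --- while the paper's B\'ezout computation is shorter, doing the work of the generalized Chinese Remainder Theorem and the lcm/gcd identity in one line.
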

\begin{proof}
Suppose that $z = g^k$ is not indecomposable. Then there exist $0 < r,s <d$ such that $z = g^r g^s$ 
where $g^r$ and $g^s$ have disjoint support. Therefore, for all $i \in [t]$ we get that in the group $\group {z_i}$ the element 
$z_i^k$ is decomposable in $z_i^r$ and $z_i^s$. Since in $\group {z_i}$ all elements are indecomposable due to Corollary~\ref{simplexkrit} 
and Theorem~\ref{smallfacetheo}, 
we have either $z_i^r = 1$ or $z_i^s = 1$. Therefore, we can find a proper non-empty subset $I$ of $[t]$ such that 
$z_i^r = 1$ for all $i \in I$, and $z_i^s = 1$ for all $i \in I^c$. This implies $d_I | r$ and $d_{I^c} | s$. 
Since $k \equiv r+s \ \ (\mod\, d)$ and $\lcm(d_I, d_{I^c}) = d$, we easily see that (1) and (2) hold.

Now, let us assume that there is a proper non-empty subset $I$ of
$[t]$ such that (1) and (2) hold.
Let $a^\prime$ and $b^\prime$ be integers such that $\gcd(d_I,
d_{I^c}) = a^\prime d_I + b^\prime d_{I^c}$.  Due to (1) we have $k =
ad_I + b d_{I^c}$ with $a:= a^\prime k/\gcd(d_I, d_{I^c}) \in \Z$ and
$b:= b^\prime k/\gcd(d_I, d_{I^c}) \in \Z$.  Let $a = q_1 (d/d_I) +
r_1$ with $q_1 \in \Z$ and $r_1 \in [[d/d_I]]$ and $b
= q_2 (d/d_{I^c}) + r_2$ with $q_2 \in \Z$ and $r_2 \in
[[d/d_{I^c}]]$. Then $k = (q_1+q_2) d + r_1 d_I + r_2 d_{I^c}$. We
set $0 \leq s:= r_1 d_I < d$ and $0 \leq t:= r_2 d_{I^c} < t$.  Hence,
$g^k = g^s g^t$.  
If $s = 0$, then $r_1 = 0$ and therefore $d_{I^c}$ divides $k$ in
contradiction to (2). In the same way we obtain $t \neq 0$.
This shows that $g^k$ is decomposable.
\end{proof}

As the following result shows, it is possible to determine whether a
given element $z \in G$ is indecomposable without having to know the
cycle decomposition of a generator $g$ of the cyclic group $G$.
\begin{corollary}\label{criterion decomp}
Let $z$ be an element in the cyclic group $ G \leq S_n$ having cycle
decomposition into $r$ non-trivial cycles of lengths $\ell_1, \ldots,
\ell_r$.

Then $z$ is decomposable if and only if there is a proper non-empty
subset $K$ of $[r]$ such that $\gcd(\ell_i,\ell_j) = 1$ for all $i \in
K$ and $j \in K^c$.
\end{corollary}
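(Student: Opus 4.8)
The plan is to show that whether $z$ is decomposable in $G$ depends only on $z$ itself, and in fact is equivalent to $z$ being decomposable in the cyclic subgroup $\group{z}\le G$. Write $z = z_1\circ\cdots\circ z_r$ for the decomposition into non-trivial cycles (of lengths $\ell_1,\dots,\ell_r$), and for $I\subseteq[r]$ put $z_I := \prod_{i\in I}z_i$, $m_I := \lcm(\ell_i : i\in I) = \ord(z_I)$, and $I^c := [r]\setminus I$. The subelements of $z$ are exactly the $z_I$, and $z_I\ne e,z$ precisely when $\emptyset\ne I\ne[r]$ (each $\ell_i\ge2$, so $z_I=e$ forces $I=\emptyset$). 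Hence, by definition of decomposability (cf.\ Theorem~\ref{smallfacetheo}), $z$ is decomposable in $G$ if and only if $z_I\in G$ for some proper non-empty $I\subseteq[r]$, and the whole task reduces to characterizing, in terms of the $\ell_i$, when such a $z_I$ lies in $G$.

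The key step---which I expect to be the only non-routine one---is: if $z_I\in G$ for a proper non-empty $I$, then $\gcd(m_I,m_{I^c})=1$. Indeed, $z_{I^c}=z_I^{-1}z$ then lies in $G$ as well, so $\pro{z_I}{z_{I^c}}$ is a subgroup of the cyclic group $G$ and is therefore cyclic. On the other hand $z_I$ and $z_{I^c}$ have disjoint supports, hence they commute and $\group{z_I}\cap\group{z_{I^c}}=\{e\}$, so $\pro{z_I}{z_{I^c}}$ is the internal direct product of two cyclic groups of orders $m_I$ and $m_{I^c}$; such a product is cyclic only if these orders are coprime.

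Conversely, I would check that $\gcd(m_I,m_{I^c})=1$ already forces $z_I\in\group{z}\subseteq G$: by the Chinese remainder theorem pick $j\in\Z$ with $j\equiv1\pmod{m_I}$ and $j\equiv0\pmod{m_{I^c}}$. Since $z=\prod_i z_i$ is a product of commuting disjoint cycles, $z^j=\prod_i z_i^j$; for $i\in I$ we have $z_i^j=z_i$ (as $\ell_i\mid m_I\mid j-1$) and for $i\in I^c$ we have $z_i^j=e$ (as $\ell_i\mid m_{I^c}\mid j$), so $z^j=z_I$. Combining the two implications, $z$ is decomposable in $G$ if and only if $\gcd(m_I,m_{I^c})=1$ for some proper non-empty $I\subseteq[r]$.

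It then only remains to match this with the asserted pairwise condition, taking $K=I$ in both directions. If $\gcd(m_K,m_{K^c})=1$, then $\gcd(\ell_i,\ell_j)\mid\gcd(m_K,m_{K^c})=1$ for all $i\in K$, $j\in K^c$. Conversely, if $\gcd(\ell_i,\ell_j)=1$ for all $i\in K$, $j\in K^c$, then no prime divides both $m_K=\lcm(\ell_i:i\in K)$ and $m_{K^c}=\lcm(\ell_j:j\in K^c)$, so $\gcd(m_K,m_{K^c})=1$. Apart from the index bookkeeping in the first paragraph and the (standard) fact that subgroups of cyclic groups are cyclic, I anticipate no real difficulty; the content of the corollary is essentially the cyclicity argument of the second paragraph.
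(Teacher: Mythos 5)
Your argument is correct, but it takes a genuinely different route from the paper's. The paper deduces the corollary from Proposition~\ref{2criterion decomp}: writing $z=g^k$ for a generator $g$ of $G$ whose cycles have lengths $\ell_1,\dots,\ell_t$, it uses the fact that the $k$-th power of an $\ell$-cycle splits into cycles of length $\ell/\gcd(\ell,k)$ to translate the divisibility conditions on $k$ from that proposition into the stated pairwise-coprimality condition on the cycle lengths of $z$ itself. You instead argue directly and self-containedly: a proper subelement $z_I$ lies in $G$ if and only if $\pro{z_I}{z_{I^c}}\cong\Z_{m_I}\times\Z_{m_{I^c}}$ is cyclic (being a subgroup of the cyclic group $G$), i.e.\ if and only if $\gcd(m_I,m_{I^c})=1$; and conversely the Chinese remainder theorem exhibits $z_I$ as a power of $z$ when these orders are coprime. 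All steps check out: the reduction to ``some $z_I\in G$ with $\emptyset\ne I\ne[r]$'' uses that the cycles are non-trivial, the triviality of $\group{z_I}\cap\group{z_{I^c}}$ follows from disjointness of supports, and the equivalence of $\gcd(m_K,m_{K^c})=1$ with pairwise coprimality of the $\ell_i$ across $K$ and $K^c$ is immediate. Your approach avoids the generator $g$ and Proposition~\ref{2criterion decomp} entirely and makes transparent the point emphasized in the sentence preceding the corollary, namely that decomposability can be read off from $z$ alone; what it does not give is the finer information of Proposition~\ref{2criterion decomp} (exactly which exponents $k$ yield decomposable $g^k$), which the paper needs elsewhere, e.g.\ for the vertex-degree formula and Corollary~\ref{complete}.
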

\begin{proof}
Let us observe that for a cycle $w$ of length $\ell$, the multiple
$w^k$ has a cycle decomposition into disjoint cycles of the same
length $\ell/\gcd(\ell,k)$. Therefore, for $z = g^k$ with $0 \le k <
d$ the second condition translates into the following statement: there
is a proper non-empty subset $I$ of $[t]$ such that
$\ell_i/\gcd(\ell_i,k) \not=1$ for some $i \in I$,
$\ell_j/\gcd(\ell_j,k) \not=1$ for some $j \in I^c$, and  
$\gcd(\ell_i/\gcd(\ell_i,k),\ell_j/\gcd(\ell_j,k)) = 1$ for all $i \in
I$ and $j \in I^c$.
Now, the proof follows by applying Proposition~\ref{2criterion decomp}.
\end{proof}

Given the lengths of cycles in the generating element  we can immediately determine the number
 of indecomposable elements of the group by using an obvious sieve method. By Corollary~\ref{cor:degreecoro} this allows to deduce an explicit formula for the constant 
vertex degree of the associated permutation polytope. We leave the proof to the reader.
\begin{corollary}
Let us set up the following notation:
\begin{itemize}
\item For $s = (2^t-2)/2$, let $I_1, \ldots, I_s$ denote the pairwise
  different partitions of $[t]$, i.e., $\emptyset \not= I_m \subsetneq
  [t]$ and $I_m \uplus I^c_m = [t]$ for $m \in [s]$;
\item for $M \subseteq [s]$, let $y_M$ be the least common multiple of $\gcd(d_{I_m},d_{I^c_m})$ for all $m \in M$;
\item for $T \subseteq N \subseteq [s]$, let $z_{N,T}$ be the least common multiple of 
$d_{I^c_n}$ (for all $n \in N \backslash T$) and of $d_{I_n}$ (for all $n \in T$).
\end{itemize}
Then the degree of a vertex in a permutation polytope associated to the cyclic group $G = \group g$ is given by
\[\sum_{M \subseteq [s]} \sum_{N \subseteq M} 
\sum_{T \subseteq N} (-1)^{\card{M}+\card{N}} \left(\frac{d}{\lcm(y_M,z_{N,T})} - 1 \right).\]
\end{corollary}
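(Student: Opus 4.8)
The plan is to express the vertex degree as a triple inclusion–exclusion count built on top of the decomposability criterion of Proposition~\ref{2crit}. Recall from Corollary~\ref{cor:degreecoro} that the degree equals the number of $k \in \{0,\ldots,d-1\}$ for which $g^k$ is indecomposable, and $g^0 = e$ is of course excluded since $g^0$ is decomposable (or we simply note $0$ fails condition (1) vacuously only when $\gcd \mid 0$ always, so we must be careful and treat $k=0$ as contributing to the "decomposable" set). By Proposition~\ref{2crit}, $g^k$ is \emph{decomposable} exactly when $k$ lies in the set
\[
  D := \bigcup_{m=1}^{s} \bigl\{\, k \in [[d]] \;:\; \gcd(d_{I_m},d_{I^c_m}) \divides k,\ d_{I_m} \notdiv k,\ d_{I^c_m} \notdiv k \,\bigr\},
\]
where $I_1,\ldots,I_s$ runs over the unordered partitions $\{I_m,I^c_m\}$ of $[t]$ (each ordered pair and its swap give the same condition, hence $s = (2^t-2)/2$). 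The degree is then $d - 1 - |D| + |\{k : g^k = e\}|$ adjusted appropriately; more cleanly, the number of indecomposable elements different from $e$ equals $|[[d]] \setminus (D \cup \{0\})| = d - |D \cup \{0\}|$, and one checks $0 \in D$ iff $t \geq 2$ and some partition has both parts with $d_{I_m}, d_{I^c_m} > 1$, which is automatic; so the count is simply $d - |D|$ once one verifies $0 \in D$ (true whenever $G \neq \{e\}$, i.e. $d>1$). Thus the entire task reduces to computing $|D|$ by inclusion–exclusion.

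The key steps, in order. First, apply inclusion–exclusion over the $s$ sets in the union: $|D| = \sum_{\emptyset \neq M \subseteq [s]} (-1)^{|M|+1} |A_M|$, where $A_M = \bigcap_{m \in M} A_m$ and $A_m$ is the $m$-th set above. Second, rewrite each $A_M$ as a difference: $k \in A_M$ iff $y_M \divides k$ (since $k$ must be divisible by every $\gcd(d_{I_m},d_{I^c_m})$ for $m \in M$, hence by their lcm $y_M$) and, for each $m \in M$, \emph{not} both $d_{I_m} \mid k$ fails... wait, the "$\notdiv$" conditions say: for each $m \in M$, $d_{I_m} \notdiv k$ AND $d_{I^c_m} \notdiv k$. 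So $A_M = \{k : y_M \mid k\} \setminus \bigcup_{m \in M}\bigl(\{k : d_{I_m} \mid k\} \cup \{k : d_{I^c_m}\mid k\}\bigr)$. Third, apply inclusion–exclusion a second time to this last union: a subset of the $2|M|$ "bad divisibility" conditions is chosen, but for each $m \in M$ one may pick the $d_{I^c_m}$-condition or the $d_{I_m}$-condition or both — this is exactly the bookkeeping encoded by the pair $T \subseteq N \subseteq M$ in the statement, where $N$ is the set of indices $m$ for which at least one divisibility constraint is imposed, $T \subseteq N$ records where the $d_{I_m}$-side is imposed, $N \setminus T$ where the $d_{I^c_m}$-side is imposed, and indices in $N$ carrying \emph{both} constraints get absorbed because requiring $d_{I_m} \mid k$ and $d_{I^c_m}\mid k$ simultaneously is the same as requiring $\lcm(d_{I_m}, d_{I^c_m}) = d \mid k$, which only $k = 0$ satisfies — so one must track this carefully, likely by grouping terms so that the "both" case is folded into $z_{N,T}$ via an lcm. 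Fourth, combine: the set of $k \in [[d]]$ divisible by $\lcm(y_M, z_{N,T})$ has cardinality $d / \lcm(y_M, z_{N,T})$, and subtracting the single element $k=0$ where relevant produces the $\bigl(\tfrac{d}{\lcm(y_M,z_{N,T})} - 1\bigr)$ term; sorting out the signs $(-1)^{|M|+|N|}$ is then a matter of multiplying the two inclusion–exclusion sign patterns and checking the $k=0$ contribution telescopes to give the degree exactly (the "$-1$" summands should sum to the right constant, as in the proof of Corollary~\ref{formula}).

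The main obstacle I anticipate is the precise handling of the "both sides imposed" degenerate case and making sure the three nested sums are indexed so that every term is counted with the correct multiplicity and sign — in particular, verifying that imposing $d_{I_m}\mid k$ and $d_{I^c_m} \mid k$ together is correctly represented within the $z_{N,T}$ lcm rather than producing a spurious separate regime, and confirming that the constant "$-1$" terms aggregate to give $d - |D|$ rather than some shifted value. A secondary but routine check is that the authors' claim $s = (2^t-2)/2$ correctly enumerates unordered partitions and that swapping $I_m \leftrightarrow I^c_m$ genuinely leaves all three conditions symmetric (it does, since $\gcd$, "$d_{I_m}\notdiv k$ and $d_{I^c_m}\notdiv k$" are all symmetric in the pair). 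Given that the paper explicitly says "We leave the proof to the reader," the intended argument is surely exactly this double-then-triple inclusion–exclusion, and once the indexing conventions for $y_M$ and $z_{N,T}$ are pinned down the formula falls out; I would present it by first establishing the reduction to $|D|$, then carrying out the two inclusion–exclusion passes symbolically, and finally matching the resulting expression term-by-term with the displayed formula.
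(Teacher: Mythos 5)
Your overall strategy --- reduce to counting decomposable exponents via Corollary~\ref{cor:degreecoro} and Proposition~\ref{2crit}, then run a nested inclusion--exclusion over the $s$ unordered partitions and, inside each intersection, over the ``forbidden'' divisibility conditions --- is exactly the ``obvious sieve method'' the paper has in mind (it gives no proof, so there is nothing more specific to compare against). However, two points in your write-up are not right as stated. First, your claim that $0 \in D$ is false: for $k=0$ \emph{every} integer divides $k$, so condition (2) of Proposition~\ref{2crit} fails and $g^0=e$ is not decomposable by the criterion; hence the degree is $(d-1)-\card{D}$, not $d-\card{D}$. This matters because the $M=\emptyset$ term of the asserted formula is exactly $d/1-1 = d-1$, so your version would be off by one. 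The clean fix is to work inside $[d-1]=\{1,\dots,d-1\}$ from the start, where $\card{\{k \in [d-1] : \ell \divides k\}} = d/\ell - 1$ for $\ell \divides d$; this is also why every summand carries the ``$-1$''.

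Second, you correctly identify the ``both constraints imposed'' case as the main obstacle but leave it unresolved, and your suggested resolution (``folded into $z_{N,T}$ via an lcm'') is not what happens. The resolution is that these terms simply vanish: since $\lcm(d_{I_m},d_{I^c_m}) = d$, requiring both $d_{I_m} \divides k$ and $d_{I^c_m} \divides k$ forces $d \divides k$, and no $k \in [d-1]$ satisfies this (equivalently, the corresponding summand would be $d/d - 1 = 0$). Once those selections are discarded, a selection of at most one forbidden divisor per index $m \in M$ is precisely a pair $T \subseteq N \subseteq M$, with sign $(-1)^{\card{N}}$ from the inner sieve and $(-1)^{\card{M}+1}$ from the outer one; combining with the leading $d-1$ gives the displayed triple sum with sign $(-1)^{\card{M}+\card{N}}$. (A further small point you should make explicit: the formula sums over $N \subseteq M \subseteq [s]$ with $M$ ranging over \emph{all} subsets, and the inner sum for fixed nonempty $M$ computes $\card{A_M}$ where $A_M = \bigcap_{m\in M} A_m$ --- note that $y_M \divides k$ is indeed equivalent to $\gcd(d_{I_m},d_{I^c_m}) \divides k$ for all $m \in M$.) With these two repairs your argument goes through.
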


As another application of Proposition~\ref{2criterion decomp} we can characterize permutation polytopes 
with complete vertex-edge graph. 
\begin{corollary} \label{complete}
The vertex-edge graph of $P(G)$ is complete if and only if for all $I \subseteq [t]$: 
$d_I = d$ or $d_{I^c} = d$.
\end{corollary}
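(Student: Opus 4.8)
The plan is to reduce completeness of the vertex-edge graph of $P(G)$ to the indecomposability of \emph{every} power $g^k$, and then to turn this into the asserted divisibility condition via Proposition~\ref{2crit}.

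First I would use that $G$ acts on $P(G)$ through the invertible linear maps $X \mapsto \prmat(g^i)\, X$ of $\Mat_n(\R)$; these permute the vertices $\prmat(e), \prmat(g), \ldots, \prmat(g^{d-1})$ transitively, hence they act on the vertex-edge graph. The orbit of the pair $\{\prmat(e), \prmat(g^k)\}$ equals $\{\,\{\prmat(g^i), \prmat(g^{i+k})\} : i\,\}$, and every pair of distinct vertices occurs in one of these orbits (for some $k \in \{1, \ldots, d-1\}$). Since the $G$-action preserves the edge set, the graph is complete if and only if for each $k \in \{1, \ldots, d-1\}$ the pair $\{e, g^k\}$ spans an edge, which by Theorem~\ref{smallfacetheo} means precisely that $g^k$ is indecomposable. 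Negating the criterion of Proposition~\ref{2crit}, I conclude: the graph of $P(G)$ is complete if and only if, for all $k \in \{1, \ldots, d-1\}$ and all proper non-empty $I \subseteq [t]$, the divisibility $\gcd(d_I, d_{I^c}) \divides k$ forces $d_I \divides k$ or $d_{I^c} \divides k$. Call this statement $(\ast)$.

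It then remains to show that $(\ast)$ is equivalent to: $d_I = d$ or $d_{I^c} = d$ for all $I \subseteq [t]$. The cases $I \in \{\emptyset, [t]\}$ are automatic because $d_{[t]} = \lcm(\ell_1, \ldots, \ell_t) = d$, so I only treat proper non-empty $I$, for which I record $\lcm(d_I, d_{I^c}) = d$ and $d_I, d_{I^c} \divides d$. If $d_I = d$ (the other case being symmetric), then $\gcd(d_I, d_{I^c}) = d_{I^c}$, so the hypothesis of $(\ast)$ already delivers $d_{I^c} \divides k$; hence $(\ast)$ holds. For the converse I argue contrapositively: if $d_I \neq d$ and $d_{I^c} \neq d$ for some proper non-empty $I$, I would take $k := \gcd(d_I, d_{I^c})$ and check that $1 \le k \le d-1$, that $d_I \notdiv k$ (otherwise $d_I \divides d_{I^c}$ and thus $\lcm(d_I, d_{I^c}) = d_{I^c} = d$, a contradiction), and similarly $d_{I^c} \notdiv k$; such $k$ violates $(\ast)$, so the graph is not complete.

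I do not anticipate a real obstacle: the proof is just the transitive $G$-action on the graph combined with the elementary number theory of $\gcd$ and $\lcm$ of the partial periods $d_I$ (repeatedly invoking $\lcm(d_I, d_{I^c}) = d$ and the fact that $a \divides \gcd(a,b)$ iff $a \divides b$). The one place to be careful is the initial reduction, namely verifying that the $G$-action genuinely permutes edges so that completeness may be tested at the single vertex $\prmat(e)$; this is immediate once one notes the action is by invertible \emph{linear} maps of the ambient matrix space.
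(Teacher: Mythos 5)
Your proof is correct and follows essentially the same route as the paper: reduce completeness to indecomposability of every $g^k$ via vertex-transitivity and Theorem~\ref{smallfacetheo}, then apply Proposition~\ref{2crit}. The only (cosmetic) difference is the witness for the converse: you take $k=\gcd(d_I,d_{I^c})$ where the paper takes $k=d_I+d_{I^c}$; both satisfy conditions (1) and (2) of Proposition~\ref{2crit} by the same $\lcm(d_I,d_{I^c})=d$ argument.
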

\begin{proof}
The vertex-edge graph is not complete if and only if there is a decomposable element in $G$. 

If $g^k$ is decomposable, then the subset $I \subseteq [t]$ from Proposition \ref{2crit} has obviously the property $d_I < d$ and $d_{I^c} < d$. 
On the other hand, if there is some $I \subseteq [t]$ with $d_I < d$ and $d_{I^c} < d$, then 
$d_I$ and $d_{I^c}$ do not divide $d_I + d_{I^c}$, since $\lcm(d_I,d_{I^c}) = d$. Hence 
$g^{d_I + d_{I^c}}$ is decomposable due to Proposition \ref{2crit}. 
\end{proof}
This criterion may be used to give many interesting high-dimensional examples of such polytopes, cf. Section 3. 
We finish the section with the following conjecture, which holds for $l=1$ by the previous corollary and 
has been experimentally checked in many cases. 
\begin{conjecture}\label{neighb}
Let $l \geq 1$. 
If $d_I=d$ for all $ I\subseteq [t]$ with $|I|\ge \lceil\frac{t}{l+1}\rceil$, then $P(G)$ is {\em $(l+1)$-neighborly}, 
i.e, every subset of at most $l+1$ vertices of $P(G)$ forms the vertex set of a face.
\end{conjecture}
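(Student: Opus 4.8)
\textbf{Proof proposal for Conjecture~\ref{neighb}.}
The plan is to reduce $(l+1)$-neighborliness to a statement about faces $F_{g^k}$ of the form treated in Theorem~\ref{smallfacetheo}. First I would record the following general fact: since $G$ acts transitively on the vertices of $P(G)$ by multiplication, and this action extends to an affine automorphism, the link of any vertex looks the same; in particular a set $\{g^{k_0},\ldots,g^{k_m}\}$ of $m+1\le l+1$ vertices is a face if and only if the translated set $\{e,g^{k_1-k_0},\ldots,g^{k_m-k_0}\}$ is a face. So it suffices to show that every subset $\{e,g^{a_1},\ldots,g^{a_m}\}$ with $m\le l$ spans a face of $P(G)$. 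The natural candidate for the smallest face containing these points is $F_h$ where $h = g^{a_1}\cdots g^{a_m}$ --- more precisely, one should look at the subgroup $H=\langle g^{a_1},\ldots,g^{a_m}\rangle = \langle g^{\gcd(a_1,\ldots,a_m,d)}\rangle$ and the element of $G$ whose cycle decomposition "combines" the relevant cycles. The key structural input is that $P(G)$ is a marginal/transportation-type polytope whose faces containing $e$ are exactly the $F_{g^k}$, and the vertex set of $F_{g^k}$ is the set of subelements of $g^k$ in $G$.

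Next I would translate the hypothesis. Write each $a_i$ in terms of which cycles of $g$ it kills: $g^{a_i}$ acts nontrivially exactly on the cycles $z_j$ with $\ell_j \nmid a_i$; call this set of indices $S_i\subseteq[t]$. The face $F_{g^k}$ containing $e,g^{a_1},\ldots,g^{a_m}$ can be forced to be a simplex --- hence these $m+1\le l+1$ points affinely independent and forming a face --- provided $g^k$ is \emph{indecomposable} in $G$ where $k$ is chosen as a suitable combination (e.g. $k$ with $g^k$ supported on $S_1\cup\cdots\cup S_m$ and of maximal order there). By Proposition~\ref{2criterion decomp}, indecomposability of such a $g^k$ amounts to: there is no proper nonempty $I\subseteq (S_1\cup\cdots\cup S_m)$ splitting the support with $\gcd(d_I,d_{I^c})\mid k$ but $d_I\nmid k$, $d_{I^c}\nmid k$. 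Since $|S_1\cup\cdots\cup S_m|$ is controlled (each $g^{a_i}$, being a power of $g$, has support a union of cycles, and there are at most $m\le l$ of them contributing "essential" cycle classes, giving at most... here one needs the counting that at most $t - \lceil t/(l+1)\rceil + 1$... ), I would invoke the hypothesis "$d_I = d$ whenever $|I|\ge\lceil t/(l+1)\rceil$" to rule out any offending $I$: any splitting $I\uplus I^c$ of a set of $\le t$ indices into two parts has one part of size $\ge \lceil t/(l+1)\rceil$ once we are in the $(l+1)$-neighborly regime, so that part $J$ has $d_J = d$, forcing $d_J\mid k$ for the natural choice of $k$ (as $k$ is a multiple of the orders of the killed cycles' complement), contradicting condition (2). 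Hence $g^k$ is indecomposable, $F_{g^k}$ is an edge/simplex, and in fact by the same argument every $F_{g^{k'}}$ with $k'$ supported on $\le$ the allowed number of cycles is a simplex; its vertices include our chosen points, which are therefore affinely independent and form a face.

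I would organize the write-up as: (i) a lemma that faces of $P(G)$ through $e$ are the $F_{g^k}$ and that $\dim F_{g^k}$ and its vertex count are computed by Propositions~\ref{dimsimplex}--\ref{2criterion decomp} applied to $\langle g^k\rangle$, so $F_{g^k}$ is a simplex iff $g^k$ is indecomposable in $\langle g^k\rangle$, equivalently (Corollary~\ref{criterion decomp}) iff the cycle lengths of $g^k$ do not split into two coprime blocks; (ii) the transitivity reduction to faces through $e$; (iii) the combinatorial core: given $m\le l$ powers, the union of their supports meets at most some number of cycle classes whose "lengths" $d_I$, for any bipartition $I\uplus I^c$ of them, cannot both be $<d$ under the hypothesis, because the larger block already has size $\ge\lceil t/(l+1)\rceil$; conclude via Corollary~\ref{complete}'s proof technique generalized. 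The main obstacle I anticipate is step (iii) --- specifically, making precise \emph{why} $m\le l$ chosen vertices only "involve" few enough cycle classes that every bipartition of the involved classes has a big block: one must be careful that a single power $g^{a_i}$ can already be nontrivial on many cycles, so the reduction is not simply "$\le l$ cycles." The correct bookkeeping is in terms of the lattice of subgroups $\langle g^{d/d_I}\rangle$ and the faces they cut out, and I expect the honest statement to be that the smallest face containing $e,g^{a_1},\ldots,g^{a_m}$ is $F_{g^{k}}$ for $k=\gcd(a_1,\dots,a_m,d)$, whose indecomposability must then be checked --- and it is exactly here that a naive argument may fail, requiring either a strengthening of the hypothesis or a more delicate induction on $t$. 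This gap is presumably why the statement is left as a conjecture rather than a theorem, and I would flag in the write-up that the case $l\ge 2$ needs the full force of a not-yet-available characterization of when $F_{g^k}$ is a simplex in terms of the arithmetic of $(\ell_1,\dots,\ell_t)$ and $k$ simultaneously.
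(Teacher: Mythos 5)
The statement you are trying to prove is stated in the paper as a \emph{conjecture}: the authors prove it only for $l=1$ (where $(l+1)$-neighborliness is just completeness of the vertex--edge graph, which is exactly Corollary~\ref{complete}) and report that it has been checked experimentally in further cases. So there is no proof in the paper to compare against, and your write-up — which candidly ends by identifying an unresolved gap — is not a proof either. It is worth being precise about where the strategy breaks, because the break is structural rather than a matter of bookkeeping.

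The central problem is that all of the paper's machinery (Theorem~\ref{smallfacetheo}, Proposition~\ref{2criterion decomp}, Corollary~\ref{criterion decomp}, Corollary~\ref{complete}) controls only the \emph{minimal face $F_h$ containing $e$ and one other vertex $h$}. Your step (i), asserting that the faces of $P(G)$ through $e$ are exactly the $F_{g^k}$, is false: there are only $d-1$ candidates $g^k$, while already the facets through a given vertex can be exponentially many (Theorem~\ref{main-theo}), so most faces through $e$ are not of this form. Even the weaker claim you actually need — that $e, g^{a_1},\ldots,g^{a_m}$ all lie on a common $F_{g^k}$ which is a simplex — fails: the vertex set of $F_{g^k}$ consists of the subelements of $g^k$, i.e.\ products of subsets of the disjoint cycles of $g^k$. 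If $g^{a_1}$ and $g^{a_2}$ are two distinct powers each with full support (which is the generic situation, and is forced when all $d_I=d$ so that every nonidentity power is indecomposable), then neither is a subelement of the other nor of any third element, so no $F_{g^k}$ contains all three of $e, g^{a_1}, g^{a_2}$. Consequently the reduction to ``indecomposability of a suitable $g^k$'' cannot even get started for $l\ge 2$; the hypothesis $d_I=d$ for $|I|\ge\lceil t/(l+1)\rceil$ would have to be exploited through an explicit supporting functional for an arbitrary $(l+1)$-subset of $G$ (in the spirit of Lemma~\ref{face-lemma}), not through the arithmetic of a single exponent $k$. Your transitivity reduction in step (ii) is fine, and your final paragraph correctly diagnoses that the missing ingredient is exactly the reason the statement remains a conjecture; but as written the argument establishes nothing beyond the $l=1$ case already covered by Corollary~\ref{complete}.
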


\section{Cyclic permutation groups with few orbits}

Cyclic permutation groups with one orbit are completely described in Corollary~\ref{simplexkrit}. In this section
we study those with two or more orbits.

\subsection{Projection map and joins}
\label{proj-section}

Let $G \leq S_n$ be a permutation group with orbits $O_1, \ldots, O_t$. 
Let $g \in G$. The permutation matrix $\prmat(g)$ 
has a blockdiagonal-structure corresponding to the $t$ orbits:\

\label{proj-subs}

\bigskip

\begin{figure}[h]
  \centering
  \begin{tikzpicture}[scale=.4]

    \foreach \x/\y/\z in {x00/0/0,x30/3/0,x60/6/0,x90/9/0,x09/0/9,x39/3/9,x69/6/9,x99/9/9,x33/3/3,x06/0/6,x93/9/3,x66/6/6,x62/6/2,x35/3/5,x08/0/8,x92/9/2,x65/6/5,x38/3/8,x36/3/6,x63/6/3,xm11/-1/1,xm18/-1/8,x101/10/1,x108/10/8} { %
      \coordinate (\x) at (\y,\z);%
    } %
    \coordinate (v1) at (barycentric cs:x08=.25,x38=.25,x09=.25,x39=.25);%
    \coordinate (o1) at (barycentric cs:x06=.25,x36=.25,x09=.25,x39=.25);%
    \coordinate (v2) at (barycentric cs:x35=.25,x65=.25,x36=.25,x66=.25);%
    \coordinate (o2) at (barycentric cs:x33=.25,x63=.25,x36=.25,x66=.25);%
    \coordinate (v3) at (barycentric cs:x63=.25,x62=.25,x92=.25,x93=.25);%
    \coordinate (o3) at (barycentric cs:x60=.25,x63=.25,x90=.25,x93=.25);%

    \foreach \x/\y/\z in {y00/16/4,y01/16/5,y30/19/4,y31/19/5,y60/22/4,y61/22/5,y90/25/4,y91/25/5} {%
      \coordinate (\x) at (\y,\z);%
    }%

    \coordinate (w1) at (barycentric cs:y00=.25,y01=.25,y31=.25,y30=.25);%
    \coordinate (w2) at (barycentric cs:y30=.25,y31=.25,y61=.25,y60=.25);%
    \coordinate (w3) at (barycentric cs:y60=.25,y61=.25,y91=.25,y90=.25);%

    \coordinate (p1) at (11,4.5);%
    \coordinate (p2) at (15,4.5);%

    \fill [fill=red!10] (x06) -- (x09) -- (x39) -- (x36);%
    \fill [fill=red!20] (x08) -- (x09) -- (x39) -- (x38);%
    \fill [fill=red!10] (x33) -- (x36) -- (x66) -- (x63);%
    \fill [fill=red!20] (x35) -- (x36) -- (x66) -- (x65);%
    \fill [fill=red!10] (x60) -- (x63) -- (x93) -- (x90);%
    \fill [fill=red!20] (x62) -- (x63) -- (x93) -- (x92);%

    \fill [fill=red!20] (y00) -- (y01) -- (y91) -- (y90);%

    \draw (x108) arc (0:30:2);%
    \draw (x101) -- (x108);%
    \draw (x101) arc (0:-30:2);%
    \draw (xm18) arc (180:150:2);%
    \draw (xm11) -- (xm18);%
    \draw (xm11) arc (180:210:2);

    \draw (x06) -- (x09) -- (x39);%
    \draw (x60) -- (x90) -- (x93);%
    \draw (x33) -- (x39);%
    \draw (x60) -- (x66);%
    \draw (x33) -- (x93);%
    \draw (x06) -- (x66);%
    \draw (x62) -- (x92);%
    \draw (x35) -- (x65);%
    \draw (x08) -- (x38);%

    \draw (y00) -- (y01) -- (y91) -- (y90) -- (y00);
    \draw (y30) -- (y31);
    \draw (y60) -- (y61);

    \draw [thick,->] (p1) -- (p2);

    \node at (v1) {$v_1$};%
    \node at (o1) {$O_1$};%
    \node at (v2) {$v_2$};%
    \node at (o2) {$O_2$};%
    \node at (v3) {$v_3$};%
    \node at (o3) {$O_3$};%
    \node at (w1) {$v_1$};%
    \node at (w2) {$v_2$};%
    \node at (w3) {$v_3$};%
    \node [above=5pt] at (barycentric cs:p1=1,p2=1) {$\pi$};
  \end{tikzpicture}
  
  \caption{A permutation matrix with three orbits and the relevant first rows of each block}
\label{fig:permmatrix}
\end{figure}
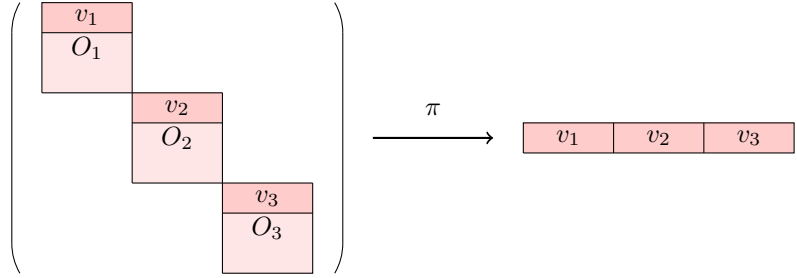
\bigskip

For any such matrix let $v_i(M) \in \R^{|O_i|}$ be the first row in the $i$th block. 
Since any element in $\aff(P(G))$ has such a block-diagonal-structure, we 
define the linear projection map 
\[\pi \;:\; \aff(P(G)) \to \{x \in \R^n \;:\; \sum_{i=1}^n x_i = t\}\]
by projecting any matrix $M$ onto $(v_1(M), \ldots, v_t(M))$.\

Let us assume that $G$ {\em acts cyclic on every orbit}, i.e., for
each $i \in [t]$ the quotient group $G/K_i$ is cyclic, where
$K_i$ is the kernel of the action of $G$ on $O_i$ (the set of group
elements which leave each element in $O_i$ fixed).
Under this assumption, $\pi$ is  a lattice isomorphism of $P(G)$ onto
its image in $\R^n$.

In some cases one can say more. For this let us give the following definition.

\begin{definition}
Let us assume that the polytope $P$ lies in an affine hyperplane of $\R^n$. Then $P$ is 
a {\em join} of polytopes $P_1, \ldots, P_s$, if 
$P$ is the convex hull of $P_1, \ldots, P_s$, and $\lin(P)= \oplus_{i=1}^s \lin(P_i)$. 
We say, $P$ a {\em $\Z$-join}, if $\lin(P) \cap \Z^n= \oplus_{i=1}^s \lin(P_i) \cap \Z^n$.
\end{definition}
A typical example is a tetrahedron: it is the join of two disjoint edges.
\begin{lemma}\label{join}
Let $G \leq S_n$ be a permutation group with orbits $O_1, \ldots, O_t$. 
For each $i \in [t]$ let $G_i$ be the stabilizer of an element $k_i \in O_i$.

If $G$ acts cyclic on every orbit, then the permutation polytope $P(G)$ is 
the $[G : H]$-fold $\Z$-join of permutation polytopes $P(H)$, for $H
:= G_1 \cdots G_t \leq G$.
\end{lemma}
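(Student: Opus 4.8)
The plan is to combine the projection map $\pi$ from the preceding subsection with a description of the image of $P(G)$ as a product along the orbits, relative to a fixed point $(k_1, \ldots, k_t)$. First I would note that $\pi$ identifies $P(G)$ lattice-isomorphically with its image, so it suffices to work inside $\{x \in \R^n : \sum x_i = t\}$ and show the image is the claimed $\Z$-join. Each vertex of $P(G)$ corresponds to a group element $h \in G$, which sends $k_i$ to some element of $O_i$; since $G$ acts cyclically on each orbit, the orbit $O_i$ is in bijection with $G/K_i$, and under $\pi$ the $i$-th block of $\prmat(h)$ is the standard basis vector $e_{h(k_i)}$ in $\R^{|O_i|}$. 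Thus $\pi(\prmat(h)) = (e_{h(k_1)}, \ldots, e_{h(k_t)})$, and the vertex set of $\pi(P(G))$ is exactly $\{(e_{h(k_1)}, \ldots, e_{h(k_t)}) : h \in G\}$.

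Next I would bring in the subgroup $H = G_1 \cdots G_t$. The key observation is that two elements $h, h' \in G$ produce the same point under $\pi$ if and only if $h(k_i) = h'(k_i)$ for all $i$, i.e.\ $h^{-1}h' \in \bigcap_i G_i$; and since $G$ is cyclic (hence abelian), $\bigcap_i G_i$ can be compared with $H = G_1 \cdots G_t$. Here I want to check that in fact $h(k_i)$ depends only on the coset $hH$ — more precisely, decompose $G$ into cosets $g_1 H, \ldots, g_m H$ with $m = [G:H]$, and observe that for $h \in g_j H$ the tuple $(e_{h(k_1)}, \ldots, e_{h(k_t)})$ ranges over a translate of the vertex set of $\pi(P(H))$ sitting in a coordinate position determined by $j$. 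The clean way to see this: $P(H)$ maps under $\pi$ into $\bigoplus_i \R^{|O_i|}$ with affine span of dimension $\sum(\dim \pi(P(H_i)\text{-block}))$, and multiplying by a coset representative $g_j$ on the left permutes the basis vectors within each block in a way compatible with the direct-sum decomposition, producing $m$ copies whose affine spans are affinely independent and whose linear spans are complementary. This is precisely the statement that $P(G)$ is the $m$-fold join of copies of $P(H)$; the $\Z$-refinement follows because $\pi$ is a lattice isomorphism and the basis-vector bookkeeping is integral.

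The step I expect to be the main obstacle is verifying the direct-sum condition on linear spans, $\lin(\pi P(G)) = \bigoplus_{j=1}^m \lin(\text{$j$-th copy of }\pi P(H))$, together with its integral version — i.e.\ that the $m$ translated copies are genuinely in "join position" and not merely that their convex hull is $P(G)$. Concretely one must show $\dim P(G) = (m-1) + m\cdot\dim P(H)$ (the join dimension formula) and that the ambient lattice splits accordingly. For the dimension count I would use Proposition~\ref{dimsimplex} / Theorem~\ref{thm:dimension}: the non-trivial irreducible $\C$-characters of $G$ appearing in the permutation representation split according to whether they are trivial on $H$ (these account for the "$m-1$" coming from the $\Delta_{m-1}$ factor of the join, i.e.\ the characters of the cyclic quotient $G/H$) or not (each such character appears once in the $H$-representation and, induced up, appears in the $G$-representation with the appropriate multiplicity). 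Matching these two bookkeepings gives the dimension equality, and since all the maps involved ($\pi$, left multiplication by coset representatives) are integral and bijective on the relevant lattices, the join is automatically a $\Z$-join.
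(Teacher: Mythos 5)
Your overall strategy --- push everything through $\pi$, split $G$ into the $[G:H]$ cosets of $H$, and show that the corresponding copies of $\pi(P(H))$ are in join position --- is the same as the paper's. Two points need attention.

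First, a small one: the hypothesis is that $G$ acts cyclically on every \emph{orbit}, not that $G$ itself is cyclic, so you cannot argue ``since $G$ is cyclic (hence abelian)''. The paper first derives that $G$ is abelian from $[G,G]\le K_i$ for every $i$ together with faithfulness, $\bigcap_i K_i=\{e\}$; this is also what guarantees that $K_i=G_i$ and that $H=G_1\cdots G_t$ is actually a subgroup. In the later application $G$ happens to be cyclic, but the lemma is stated in the more general form.

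Second, and more substantively: you correctly identify the direct-sum condition as the crux, but your fallback resolution does not deliver the $\Z$-join. The character-theoretic dimension count does work for the ordinary join: one can check that the nontrivial irreducible characters of $G$ occurring in the permutation representation are the $s-1$ nontrivial characters of $G/H$ together with the $s$ extensions of each nontrivial character of $H$ occurring in its permutation representation, giving $\dim P(G)=(s-1)+s\dim P(H)$; since each copy $P_j$ lies in the hyperplane $\sum x_i=t$ off the origin, this is equivalent to $\lin(\pi(P(G)))=\bigoplus_j\lin(P_j)$. However, the $\Z$-join statement $\lin(\pi(P(G)))\cap\Z^n=\bigoplus_j\bigl(\lin(P_j)\cap\Z^n\bigr)$ is a lattice equality, and a direct sum of sublattices can sit with finite index inside the lattice points of the direct sum of their spans; it is not ``automatic'' from integrality of the maps involved. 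The paper closes this with the observation you gesture at but do not prove: each orbit $O_i$ is partitioned into the $s$ sets $k_i^{Hg_1},\dots,k_i^{Hg_s}$, so for $j_1\neq j_2$ the vertices of $P_{j_1}$ and $P_{j_2}$ have disjoint coordinate support. Disjoint support yields at once the direct sum of the linear spans \emph{and} the lattice equality, because restricting an integral vector to a coordinate subset keeps it integral. If you make that partition statement explicit, your first sketch becomes a complete proof and the dimension count is not needed.
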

\begin{proof}
Let $K_i$ be the kernel of the action of $G$ on $O_i$, $i \in
[t]$. Then, as $G/K_i$ is cyclic, $[G,G] \leq K_i$
for $i \in [t]$. Thus $[G,G] \leq \cap_{i=1}^t K_i = \{e\}$. So $G$ is
abelian. This implies that $K_i = G_i$ for $i \in [t]$  and that $H :=
G_1 \cdots G_t$ is a subgroup of $G$.

Now let $s := [G:H]$, and let $H g_1, \ldots, H g_s$ be  the right
cosets of $G/H$.
For $j \in [s]$ we define $P_j := \pi(P(H g_j)) \cong P(H g_j) \cong
P(H)$, where these are lattice isomorphisms.
It remains to show that $\pi(P(G))$ is the $\Z$-join of $P_1, \ldots,
P_s$. It is clear that $\pi(P(G))$ is the convex hull of $P_1, \ldots,
P_s$.

Let $i,j \in [t]$. We set $k_i^{H g_j} := \{k_i^{h g_j} \,:\, h \in H\}$.
Then it is straightforward to prove that the orbit $O_i$ is partitioned into 
the sets $k_i^{H g_1}, \ldots, k_i^{H g_s}$.
This implies that for $j_1, j_2 \in [s]$ with $j_1 \not= j_2$, 
the vertices of $P_{j_1}$ and $P_{j_2}$ have disjoint
support. Therefore, $\lin(\pi(P(G))) \cap \Z^n= \oplus_{i=1}^s
\lin(P_i) \cap \Z^n$.
\end{proof}

Let us apply this lemma to the cyclic case. Let $g \in S_n$ have cycle
decomposition into cycles of lengths $\ell_1, \ldots, \ell_t$. Then
$G_i$ is generated by $g^{\ell_i}$ for $i \in [t]$.
Let $q := \gcd(\ell_1, \ldots, \ell_t)$. Hence, $H$ is generated by $g^q$. Therefore, 
$[G:H] = q$. Moreover, since $g^q$ has a cycle decomposition into cycles of the lengths $\ell_1/q, \ldots, \ell_t/q$ (with possible repetitions), 
we see that $H$ is effectively equivalent to a permutation group $H'$ generated by a product of $t$ disjoint 
cycles of lengths $\ell_1/q, \ldots, \ell_t/q$. By Theorem~\ref{thm:stablyEquivalence} 
we have $P(H) \cong P(H')$, and this projection map is even a lattice isomorphism. Lemma~\ref{join} implies that it suffices to consider the case 
$\gcd(\ell_1, \ldots, \ell_t) = 1$ in order to understand the complete face structure of $P(G)$. Together with  Proposition~\ref{disjoint} we obtain the following result.

\begin{prop}\label{two}
Let $G = \langle g\rangle  \leq S_n$ where $g$ has a cycle decomposition into two cycles of lengths $\ell_1, \ell_2$. 
We set $q := \gcd(\ell_1,\ell_2)$. Then $P(G)$ is the $q$-fold $\Z$-join of 
\[\Delta_{\frac{\ell_1}{q}-1} \times \Delta_{\frac{\ell_2}{q}-1},\]
where $\Delta_l$ is the $l$-dimensional unimodular simplex.
\end{prop}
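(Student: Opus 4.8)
The plan is to deduce Proposition~\ref{two} directly from the three results that precede it: Proposition~\ref{disjoint}, Lemma~\ref{join}, and the paragraph computing the quantities $G_i$, $H$, $[G:H]$ in the cyclic setting. First I would set $q := \gcd(\ell_1,\ell_2)$ and invoke the discussion after Lemma~\ref{join}: each stabilizer $G_i$ is generated by $g^{\ell_i}$, so $H = G_1 G_2 = \group{g^{\ell_1}, g^{\ell_2}} = \group{g^{\gcd(\ell_1,\ell_2)}} = \group{g^q}$, and hence $[G:H] = q$. This is the purely group-theoretic input and requires nothing beyond the stated material.

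Next I would identify the join factor. Since $g^q$ has disjoint cycle decomposition into cycles of lengths $\ell_1/q$ and $\ell_2/q$, the group $H = \group{g^q}$ is effectively equivalent to the cyclic group $H'$ generated by a product of two disjoint cycles of these lengths; by Theorem~\ref{thm:stablyEquivalence} (and the remark that the relevant projection is a lattice isomorphism) we get $P(H) \cong P(H')$. Now $\gcd(\ell_1/q, \ell_2/q) = 1$, so Proposition~\ref{disjoint} applies to $H'$ and yields $P(H') \cong \Delta_{\ell_1/q - 1} \times \Delta_{\ell_2/q - 1}$, a product of two unimodular simplices. Combining, $P(H)$ is lattice-isomorphic to this product.

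Finally I would feed this back into Lemma~\ref{join}. We must check the lemma's hypothesis: $G$ acts cyclically on each orbit, which is automatic here since $G = \group g$ is itself cyclic, so every quotient $G/K_i$ is cyclic. The lemma then states that $P(G)$ is the $[G:H] = q$-fold $\Z$-join of copies of $P(H)$, and substituting the identification of $P(H)$ above gives exactly the claimed description: the $q$-fold $\Z$-join of $\Delta_{\ell_1/q - 1} \times \Delta_{\ell_2/q - 1}$.

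I do not expect any genuine obstacle; the proposition is essentially a specialization and bookkeeping exercise assembling three prior results. The only point needing a word of care is making sure the various isomorphisms (the projection $\pi$, the effective equivalence $P(H)\cong P(H')$, and the product decomposition from Proposition~\ref{disjoint} via Theorem~\ref{product}) are all lattice isomorphisms, so that the conclusion is genuinely a statement about $\Z$-joins and not merely combinatorial joins; each of these compatibilities is already asserted in the cited statements, so it suffices to remark that lattice isomorphisms compose. One could even state the proof in a single sentence: ``This is the case $t=2$ of the discussion following Lemma~\ref{join}, using Proposition~\ref{disjoint} to identify $P(H)$.''
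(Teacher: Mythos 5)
Your proposal is correct and follows exactly the route the paper takes: the paragraph following Lemma~\ref{join} carries out the same computation of $G_i$, $H=\group{g^q}$, $[G:H]=q$, and the effective equivalence of $H$ with a group $H'$ having coprime cycle lengths $\ell_1/q,\ell_2/q$, after which Proposition~\ref{disjoint} identifies $P(H')$ as the product of unimodular simplices and Lemma~\ref{join} assembles the $q$-fold $\Z$-join. Your added remark about checking that all intermediate identifications are lattice isomorphisms is a fair point of care, and it is consistent with what the paper asserts.
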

The dimension of this polytope is $\ell_1 + \ell_2 -
\gcd(\ell_1,\ell_2) - 1$ in accordance with the dimension formula
given in Corollary~\ref{formula}. It has $\lcm(\ell_1,\ell_2)$
vertices and $\ell_1+\ell_2$ facets.

Ehrhart polynomials count lattice points in multiples of a lattice polytope \cite{Stanley,Ehrhart62}. 
In \cite{BDO11} Ehrhart polynomials of certain permutation polytopes are computed, including the case of a cyclic permutation group 
with one orbit. In Corollary~\ref{ehrhart} below, we will provide an explicit formula for the generating function of the Ehrhart polynomial of a permutation polytope 
associated to a cyclic permutation group with two orbits. For this purpose, we need a folklore result 
for which we couldn't find a suitable reference.

\begin{lemma}\label{h-star}
Let $\Delta_a$, $\Delta_b$ be two unimodular simplices in lattices $N_1$, $N_2$ respectively. Then 
\[\sum_{k=0}^\infty |(k (\Delta_a \times \Delta_b)) \cap (N_1 \oplus N_2)| \; t^k=\frac{\sum_{i=0}^{\min(a,b)} \binom{a}{i} \binom{b}{i} t^i}{(1-t)^{a+b+1}}.\]
\end{lemma}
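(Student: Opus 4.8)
The plan is to compute the Ehrhart series of $\Delta_a \times \Delta_b$ directly, by counting lattice points in the $k$-th dilate and then recognizing the generating function. Since $\Delta_a$ is unimodular in $N_1$, we may take $N_1 = \Z^{a+1}$ and $\Delta_a = \conv(e_0,\ldots,e_a)$, so that $k\Delta_a \cap N_1$ is the set of nonnegative integer vectors $(u_0,\ldots,u_a)$ with $\sum_j u_j = k$; likewise for $\Delta_b$ with coordinates $(w_0,\ldots,w_b)$ summing to $k$. Hence $|k(\Delta_a\times\Delta_b)\cap(N_1\oplus N_2)| = \binom{k+a}{a}\binom{k+b}{b}$, and the assertion to prove becomes the identity
\[
\sum_{k=0}^{\infty} \binom{k+a}{a}\binom{k+b}{b}\, t^k \;=\; \frac{\sum_{i=0}^{\min(a,b)} \binom{a}{i}\binom{b}{i}\, t^i}{(1-t)^{a+b+1}}.
\]
This is now a purely formal power series statement with no geometry left in it.

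Next I would prove this identity. The cleanest route is via the standard generating functions $\sum_{k\ge 0}\binom{k+a}{a}t^k = (1-t)^{-(a+1)}$ and $\sum_{k\ge 0}\binom{k+b}{b}t^k = (1-t)^{-(b+1)}$, multiplied together after a change of viewpoint: writing the left-hand side as the diagonal Hadamard-type product of these two series, one uses the integral/residue or the classical Jacobi-polynomial expansion, but the most self-contained argument is to clear denominators and compare coefficients. Multiplying both sides by $(1-t)^{a+b+1}$, the claim is equivalent to
\[
(1-t)^{a+b+1}\sum_{k\ge 0}\binom{k+a}{a}\binom{k+b}{b}t^k \;=\; \sum_{i=0}^{\min(a,b)}\binom{a}{i}\binom{b}{i}t^i,
\]
i.e.\ that the left side is a polynomial of degree $\le \min(a,b)$ with the stated coefficients. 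Extracting the coefficient of $t^i$ on the left gives $\sum_{k=0}^{i}(-1)^{i-k}\binom{a+b+1}{i-k}\binom{k+a}{a}\binom{k+b}{b}$, so what remains is the finite binomial identity
\[
\sum_{k=0}^{i}(-1)^{i-k}\binom{a+b+1}{i-k}\binom{k+a}{a}\binom{k+b}{b} \;=\; \binom{a}{i}\binom{b}{i}\qquad(0\le i),
\]
with the right side understood to be $0$ once $i>\min(a,b)$, which also establishes the polynomiality.

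I would settle this last identity by a counting/Vandermonde argument: interpret $\binom{k+a}{a}\binom{k+b}{b}$ as the number of pairs of lattice points in $k\Delta_a$ and $k\Delta_b$, and recognize the alternating sum over the $\binom{a+b+1}{i-k}$ factor as an inclusion–exclusion that forces each of the $a+b+1$ "slack'' coordinates (the $a$ coordinates of $\Delta_a$ beyond one, the $b$ of $\Delta_b$ beyond one, plus one extra) to be strictly positive; equivalently, apply the Chu–Vandermonde convolution twice. Concretely, $\sum_{i\ge 0}\binom{k+a}{a}\binom{k+b}{b}t^k(1-t)^{a+b+1}$ is the Cauchy product of $(1-t)^{-(a+1)}\cdot(1-t)^{a+1}$-type telescopings with one factor of $(1-t)$ left over, and tracking this carefully collapses the sum to $\binom{a}{i}\binom{b}{i}$.

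The main obstacle is purely bookkeeping: making the binomial identity airtight, in particular verifying that the numerator really does have degree exactly $\min(a,b)$ (not less) and that no boundary terms are mishandled when $i$ exceeds $\min(a,b)$. Everything before that — the reduction to $\Z^{a+1}\oplus\Z^{b+1}$ and the count $\binom{k+a}{a}\binom{k+b}{b}$ — is immediate from unimodularity of the simplices and the hypothesis that the ambient lattice is exactly $N_1\oplus N_2$. I would present the lattice-point count in one sentence and devote the bulk of the proof to the clean verification of the power-series identity, citing it as folklore (the numerator is the classical $h^*$-polynomial of a product of simplices) if a short self-contained derivation becomes unwieldy.
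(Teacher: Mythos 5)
Your proposal is correct and follows essentially the same route as the paper: reduce to the lattice-point count $\binom{k+a}{a}\binom{k+b}{b}$ via unimodularity, then to a finite binomial identity, which the paper simply cites as (5.28) of Graham--Knuth--Patashnik rather than proving. Your identity is the transposed form of the paper's (you extract coefficients of $(1-t)^{a+b+1}$ times the series, the paper expands $t^i/(1-t)^{a+b+1}$ and compares coefficients of $t^k$), so the only substantive difference is that you sketch a Chu--Vandermonde argument where the paper gives a citation.
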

\begin{proof}
By definition, we have 
\[|(k (\Delta_a \times \Delta_b)) \cap (N_1 \oplus N_2)| = \binom{k+a}{a}\binom{k+b}{b}.\]
Since $t^i/(1-t)^{a+b+1} = \sum_{k=0}^\infty \binom{k+a+b-i}{a+b} t^k$ (e.g., \cite{Stanley}), it remains to show that
\[\binom{k+a}{a}\binom{k+b}{b} = \sum_{i=0}^{\min(a,b)} \binom{a}{i} \binom{b}{i} \binom{k+a+b-i}{a+b}.\]
This is a well-known binomial identity. For instance, it can be deduced from (5.28) in \cite{Knuth}.
\end{proof}

It is also possible to prove the previous result by computing the
$h$-vector from a shelling of the staircase triangulation of the product of 
two simplices, cf. \cite{Roots}. 

\begin{corollary}\label{ehrhart}
Let $G = \langle g\rangle  \leq S_n$ where $g$ has a cycle decomposition into two cycles of lengths $\ell_1, \ell_2$. 
We set $q := \gcd(\ell_1,\ell_2)$. Then
\[\sum_{k=0}^\infty |(k P(G)) \cap \Z^{n^2}| \; t^k=
\frac{\left(\sum_{i=0}^{\min(\frac{\ell_1}{q}-1,\frac{\ell_2}{q}-1)} \binom{\frac{\ell_1}{q}-1}{i} \binom{\frac{\ell_2}{q}-1}{i} t^i\right)^q}{(1-t)^{\ell_1 + \ell_2 - \gcd(\ell_1,\ell_2)}}.\]
\end{corollary}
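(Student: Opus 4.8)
The plan is to combine Proposition~\ref{two} with Lemma~\ref{h-star} and the multiplicativity of Ehrhart generating functions under $\Z$-joins. First I would recall from Proposition~\ref{two} that $P(G)$ is the $q$-fold $\Z$-join of the polytope $Q := \Delta_{\ell_1/q-1} \times \Delta_{\ell_2/q-1}$, where $q = \gcd(\ell_1,\ell_2)$, and that this join is realized lattice-faithfully inside $\Z^{n^2}$ (the projection $\pi$ from Section~\ref{proj-section} is a lattice isomorphism, and the $\Z$-join condition says the lattice in the ambient span splits as the direct sum of the lattices spanned by the join factors).

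The key step is a lemma on $\Z$-joins: if $P$ is the $\Z$-join of $P_1, \ldots, P_s$, then the Ehrhart generating function of $P$ is related to those of the $P_j$ by
\[
\sum_{k \ge 0} |kP \cap \Lambda| \, t^k \;=\; \frac{\prod_{j=1}^s \Bigl((1-t)\sum_{k\ge 0}|kP_j \cap \Lambda_j|\,t^k\Bigr)}{(1-t)^{\;\dim P + 1}}.
\]
Equivalently, in terms of $h^*$-polynomials: $h^*_P(t) = \prod_j h^*_{P_j}(t)$, and $\dim P + 1 = \sum_j (\dim P_j + 1) - (s-1)$ because the join factors share only the affine hyperplane (they meet in a common point after the identification, contributing the $-(s-1)$). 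To prove this I would write a lattice point of $kP$ uniquely as a convex combination supported on the faces $kP_j$: since the $\lin(P_j)$ are in direct sum and the lattice splits accordingly, a lattice point of $kP$ is determined by choosing, for each $j$, a lattice point $m_j$ of some $k_j P_j$, with $\sum_j k_j = k$ — this is exactly the free-sum/join convolution, and summing the resulting product over all such decompositions gives the stated product formula after accounting for the shared affine hyperplane (the single "$(1-t)$" normalization per factor strips off the join point, and one global denominator $(1-t)^{\dim P+1}$ is reinstated).

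Applying this with $s = q$ and all $P_j \cong Q$: by Lemma~\ref{h-star} the numerator $h^*_Q(t) = \sum_{i=0}^{\min(\ell_1/q-1,\,\ell_2/q-1)} \binom{\ell_1/q-1}{i}\binom{\ell_2/q-1}{i} t^i$ and the denominator for $Q$ alone is $(1-t)^{(\ell_1/q-1)+(\ell_2/q-1)+1} = (1-t)^{(\ell_1+\ell_2)/q - 1}$, so $(1-t)\cdot(\text{Ehrhart g.f. of }Q) = h^*_Q(t)/(1-t)^{(\ell_1+\ell_2)/q-2}$; taking the $q$-th power of $h^*_Q(t)$ and dividing by $(1-t)^{\dim P(G)+1}$, where $\dim P(G) = \ell_1+\ell_2-\gcd(\ell_1,\ell_2)-1$ by Corollary~\ref{formula} (so $\dim P(G)+1 = \ell_1 + \ell_2 - q$), yields exactly the claimed formula. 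The main obstacle is proving the $\Z$-join Ehrhart identity cleanly: one must check carefully that the $\Z$-join hypothesis is precisely what licenses the direct-sum decomposition of lattice points in every dilate $kP$, and that the bookkeeping of the shared affine hyperplane produces the denominator exponent $\dim P + 1$ rather than $\sum_j(\dim P_j+1)$; once that is in place the rest is a short substitution.
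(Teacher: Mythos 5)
Your overall route is exactly the paper's: Proposition~\ref{two} identifies $P(G)$ as the $q$-fold $\Z$-join of $\Delta_{\ell_1/q-1}\times\Delta_{\ell_2/q-1}$, Lemma~\ref{h-star} supplies the $h^*$-polynomial of that product, and multiplicativity of the enumerator ($h^*$-) polynomials under $\Z$-joins finishes the computation; the paper simply cites Lemma~1.3 of \cite{HT09} for that multiplicativity, whereas you attempt to prove it. It is in that attempted proof that things go wrong. The $\Z$-join identity you display is false as stated: test it on the tetrahedron, the $\Z$-join of two unimodular segments, where each factor has Ehrhart series $1/(1-t)^2$ and your formula yields $\bigl((1-t)\cdot(1-t)^{-2}\bigr)^2/(1-t)^{4}=1/(1-t)^{6}$ instead of the correct $1/(1-t)^{4}$; the per-factor normalization must be $(1-t)^{\dim P_j+1}$, not a single $(1-t)$. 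Your dimension bookkeeping is also off: the factors of a join are pairwise disjoint (they do not ``meet in a common point''), and $\lin(P)=\oplus_j\lin(P_j)$ gives $\dim P+1=\sum_j(\dim P_j+1)$ exactly, with no $-(s-1)$ correction --- note that your own formula would give $\dim P(G)+1=\ell_1+\ell_2-2q+1$, contradicting the value $\ell_1+\ell_2-q$ that you (correctly) import from Corollary~\ref{formula} in the last step. The statement you actually need, and which your cone/direct-sum sketch does essentially support once the grading is handled correctly, is simply $h^*_P(t)=\prod_j h^*_{P_j}(t)$, equivalently that the Ehrhart series of the factors multiply; with that in place your final substitution is correct, but as written it does not follow from the lemma you stated.
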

\begin{proof}
By Lemma 1.3 in \cite{HT09} the enumerator polynomials of the Ehrhart generating series of $\Z$-joins are multiplicative. Hence, the result follows from Lemma~\ref{h-star} and Proposition~\ref{two}.
\end{proof}

\subsection{Permutation polytopes of cyclic groups with three orbits}

Let $G = \group g \le S_n$ be a cyclic permutation group of order
$d$. In Corollary~\ref{simplexkrit} and Proposition~\ref{two}
we completely described the combinatorial type of $P(G)$ when $G$ has
at most two orbits.
In the case of three orbits, we cannot present a corresponding
result. Here the situation is much more complicated.
In the following we will focus on one crucial case. For three pairwise
coprime numbers $a,b,c\in \N_{\geq 2}$ let $z_{ab}$, $z_{ac}$ and
$z_{bc}$ be three disjoint cycles of lengths $ab$, $ac$, and $bc$,
respectively. We define
\[P(a,b,c):=P(\group{z_{ab}z_{ac}z_{bc}}).\]
By Corollary~\ref{formula}, $P(a,b,c)$ has dimension $ab + ac + bc - a
- b - c$.  The number of vertices is $a b c$.  By
Corollary~\ref{complete} all of these polytopes have a complete 
vertex-edge graph.  In Table~\ref{tab:facet-numbers} we present the
number of facets which we were able to compute using
\texttt{polymake}~\cite{polymake99}. Note that one very quickly
reaches the limits of computational power.
\begin{table}[ht]
  \centering
  \begin{tabular}{r|rrrrr} 
    $(a,b,c)$ & $(2,3,5)$ & $(2,3,7)$ & $(2,5,7)$ & $(2,5,9)$ & $(3,4,5)$\\ 
    \midrule
    $\#$ dimension& 21 & 29 & 45& 57& 35\\ 
    \midrule
    $\#$ vertices& 30 & 42 & 70 & 90 & 60\\ 
    \midrule
    $\#$ facets& 211& 797& 3839& 15373& 29387
  \end{tabular}
  \caption{Dimension, vertices and facets of $P(a,b,c)$}
  \label{tab:facet-numbers}
\end{table}

The following result shows that the number of facets grows indeed exponentially.
\begin{theorem}\label{main-theo}
  Let $a,b,c \geq 2$ be pairwise coprime integers.

  Then $P(a,b,c)$ has at least $\frac12(2^a-2)(2^b-2)(2^c-2)+ab+ac+bc$
  facets.
\end{theorem}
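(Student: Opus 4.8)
The plan is to construct explicit valid inequalities for $P(a,b,c)$ and to show that they are facet-defining by exhibiting enough affinely independent vertices satisfying them with equality. We work in the coordinates given by the projection map $\pi$ of Section~\ref{proj-section}: a vertex of $P(a,b,c)$ corresponds to $g^k$ for $k\in[[abc]]$, and by the Chinese remainder theorem $k$ is encoded by a triple $(k_1,k_2,k_3)$ with $k_1\in[[ab]]$, $k_2\in[[ac]]$, $k_3\in[[bc]]$, subject to the congruences forced by the coprimality of $a,b,c$ (namely $k_1\equiv k_2\pmod a$, $k_1\equiv k_3\pmod b$, $k_2\equiv k_3\pmod c$). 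The projected vertex records, in each of the three blocks, which entry of the first row is a $1$. The first family of $ab+ac+bc$ facets is the ``trivial'' one: each coordinate $x_{i}\ge 0$ in $\R^{ab}\oplus\R^{ac}\oplus\R^{bc}$ cuts out a face, and since $P(a,b,c)$ has a complete vertex-edge graph (Corollary~\ref{complete}) and these are the coordinate inequalities of a $0/1$-polytope whose projection is full-dimensional in the relevant affine subspace, one checks they are facets exactly as for products of simplices. The bulk of the work is the remaining $\frac12(2^a-2)(2^b-2)(2^c-2)$ facets.

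For the main family, I would proceed as follows. Fix a proper nonempty subset $S_a\subsetneq \Z_a$, a proper nonempty subset $S_b\subsetneq\Z_b$, and a proper nonempty subset $S_c\subsetneq\Z_c$; there are $(2^a-2)(2^b-2)(2^c-2)$ such triples, and complementing all three simultaneously will turn out to give the same inequality, accounting for the factor $\frac12$. To each such triple I associate a linear functional on $\R^{ab}\oplus\R^{ac}\oplus\R^{bc}$ built from indicator-type sums: in the $ab$-block one sums the coordinates indexed by those positions whose residue mod $a$ lies in $S_a$ and whose residue mod $b$ lies in $S_b$ (and similarly, with the appropriate pair of moduli, in the other two blocks), with signs chosen so that the resulting functional is bounded on all $abc$ vertices and attains its maximum on a large but proper subset of them. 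Concretely, a vertex $g^k$ with CRT-data $(\alpha,\beta,\gamma)\in\Z_a\times\Z_b\times\Z_c$ contributes to the $ab$-block term exactly when $\alpha\in S_a$ and $\beta\in S_b$, to the $ac$-block term exactly when $\alpha\in S_a$ and $\gamma\in S_c$, and to the $bc$-block term exactly when $\beta\in S_b$ and $\gamma\in S_c$; so the functional value depends only on the three booleans $[\alpha\in S_a],[\beta\in S_b],[\gamma\in S_c]$, and with the sign pattern $(+,+,+)$ for the three blocks one checks the value is maximized precisely on the vertices for which at least two of these booleans are true. One then writes down the inequality ``functional $\le$ (that maximum value)'', verifies validity by the case analysis on the eight boolean patterns, and identifies the face it defines as $\{$vertices with $\ge 2$ of the three membership conditions$\}$.

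The heart of the argument — and the step I expect to be the main obstacle — is showing this face has codimension exactly $1$, i.e. that the vertices on it affinely span a hyperplane. The clean way is to count: $\dim P(a,b,c)=ab+ac+bc-a-b-c$, so I must produce $ab+ac+bc-a-b-c$ affinely independent vertices satisfying the inequality with equality, or equivalently show the face is not contained in any smaller face and is not all of $P(a,b,c)$. I would do this by an explicit dimension computation inside the face: the vertices with all three booleans true form (by the CRT parametrization) a sub-permutation-polytope isomorphic to $P(|S_a|\cdot\text{-cycle stuff})$, and adjoining the vertices with exactly two booleans true enlarges the affine span block by block; tracking the rank contributed in each of the three coordinate blocks, one shows the total is $ab+ac+bc-a-b-c$ provided $|S_a|,|S_b|,|S_c|$ and their complements are all nonempty — which is exactly the hypothesis that the subsets are proper and nonempty. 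Checking that distinct triples $(S_a,S_b,S_c)$, up to simultaneous complementation, give distinct facets is then a short separation argument: the symmetric difference of two such faces contains a vertex where the functionals disagree. Finally one notes the $ab+ac+bc$ coordinate facets are distinct from all of these (e.g. by comparing the number of vertices on each facet, or by noting a coordinate facet is missing a whole ``layer'' of vertices that every facet in the main family contains), so the two counts add, giving the bound $\frac12(2^a-2)(2^b-2)(2^c-2)+ab+ac+bc$.
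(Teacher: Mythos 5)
There is a genuine gap in the construction of the main family, and it sits exactly where the facets are supposed to come from. With your sign pattern $(+,+,+)$ and conjunctive membership conditions, the value of your functional at a vertex with booleans $(p,q,r)=([\alpha\in S_a],[\beta\in S_b],[\gamma\in S_c])$ is $pq+pr+qr$, which takes the values $3,1,1,1,0,0,0,0$ on the eight patterns. Its maximum $3$ is attained only at $(1,1,1)$, not on the set ``at least two booleans true'' (where it takes both values $3$ and $1$, so that set is not of the form $\{\text{functional}=\max\}$ at all). Hence the face you actually cut out is $\{(\alpha,\beta,\gamma):\alpha\in S_a,\ \beta\in S_b,\ \gamma\in S_c\}$, which has at most $(a-1)(b-1)(c-1)$ vertices and cannot be a facet of a polytope of dimension $ab+ac+bc-a-b-c$ (for $(a,b,c)=(2,3,5)$ this is at most $8$ vertices versus the required $21$). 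Moreover, the face you name is not invariant under simultaneously complementing $S_a,S_b,S_c$ --- that operation sends ``at least two true'' to ``at most one true'' --- so the factor $\tfrac12$ in your count is inconsistent with your own construction. The facets the paper exhibits are different: they are the ``checkerboard'' sets of all vertices whose three booleans are \emph{not all equal}, i.e.\ the complement of $(S_a\times S_b\times S_c)\cup(S_a^c\times S_b^c\times S_c^c)$. The supporting functional is built from the symmetric differences $(S_a\times S_b^c)\cup(S_a^c\times S_b)$ in the $ab$-block (and analogously in the other two blocks), so that on vertices it counts disagreeing pairs among $p,q,r$, which is always $0$ or $2$; this makes the face complementation-symmetric, as the count requires (see Lemma~\ref{face-lemma} and Proposition~\ref{checker}).

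Two further points. First, even with the correct face, the codimension-one verification is the real work, and you only gesture at it; the paper carries it out by an explicit affine identity showing that for any vertex $v_0$ off the face, every other missing vertex $v_1$ lies in the affine span of the face together with $v_0$ (a six-term inclusion--exclusion identity among vertices indexed by mixing the coordinates of $v_0$ and $v_1$). Second, ``one checks they are facets exactly as for products of simplices'' is not a proof that the $ab+ac+bc$ coordinate inequalities are facet-defining, since $P(a,b,c)$ is not a product of simplices; the paper proves this by writing the vertex barycenter $\hat G$ as an explicit affine combination of the vertices on the coordinate face together with a single vertex off it.
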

For $a=2$ this result seems to be optimal, see
Table~\ref{tab:facet-numbers}. This motivates the following
conjecture. Note that the bound in the theorem is not sharp for $a=3$.
\begin{conjecture}
  Let $b,c \geq 3$ be odd and coprime. Then the number of facets of
  $P(2,b,c)$ equals $(2^b-2) (2^c-2) + 2b +2c +bc$.  
\end{conjecture}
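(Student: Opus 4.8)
The plan is to replace $P(2,b,c)$ by a concrete lattice-isomorphic model and then prove an upper bound matching the lower bound of Theorem~\ref{main-theo}. Write $G=\group{g}$ with $g=z_{2b}z_{2c}z_{bc}$, so $d=|G|=2bc$ and the vertices are $g^k$, $k\in\Z_{2bc}$. Since $2,b,c$ are pairwise coprime, the Chinese remainder theorem identifies $k$ with a triple $(p,i,j)\in\Z_2\times\Z_b\times\Z_c$, and the projection $\pi$ of Section~\ref{proj-section} (here a lattice isomorphism, as $G$ acts cyclically on every orbit) sends $g^k$ to
\[v_{(p,i,j)}=\bigl(E^{12}_{(p,i)},\,E^{13}_{(p,j)},\,E^{23}_{(i,j)}\bigr),\]
a triple of standard basis vectors in $\R^{\Z_2\times\Z_b}\oplus\R^{\Z_2\times\Z_c}\oplus\R^{\Z_b\times\Z_c}$. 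Thus $P(2,b,c)$ is lattice-isomorphic to the convex hull of the three pairwise ``marginals'' of the single-cell $0/1$-tensors in a $2\times b\times c$ box, and by Corollary~\ref{formula} it has dimension $b+c+bc-2$. Up to the equations cutting out $\aff(P(2,b,c))$, every valid inequality has the form $f(p,i,j):=A_{p,i}+B_{p,j}+C_{i,j}\le\beta$, and it is facet-defining iff the cells on which $f=\beta$ affinely span a hyperplane of $\aff(P(2,b,c))$.

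Next I identify two families whose sizes add up to the claimed number. The $2b+2c+bc$ coordinatewise inequalities $E^{12}_{(p,i)}\ge 0$, $E^{13}_{(p,j)}\ge 0$, $E^{23}_{(i,j)}\ge 0$ are tight on all but $c$, $b$, resp.\ $2$ vertices; for $b,c\ge 3$ a short rank computation shows each is a facet, and these account for the summand $2b+2c+bc$. The facets underlying the lower bound of Theorem~\ref{main-theo}, specialized to $a=2$, produce one facet for each pair of a proper non-empty $S\subsetneq[b]$ and a proper non-empty $T\subsetneq[c]$; the factor $\tfrac12(2^a-2)=1$ collapses because the two admissible subsets $\{0\},\{1\}$ of $\Z_2$ are exchanged by the simultaneous-complementation symmetry. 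This gives the summand $(2^b-2)(2^c-2)$, so Theorem~\ref{main-theo} already yields the lower bound, and the whole problem is to prove that there are no further facets and that the listed ones are pairwise distinct modulo $\aff(P(2,b,c))$.

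The heart of the matter is a rigidity statement exploiting $a=2$. Because there are only two layers $p=0,1$ and the term $C_{i,j}$ is common to both, the layer-functions $f_0(i,j)=A_{0,i}+B_{0,j}+C_{i,j}$ and $f_1(i,j)=A_{1,i}+B_{1,j}+C_{i,j}$ satisfy
\[f_1(i,j)-f_0(i,j)=\phi(i)+\psi(j),\qquad \phi(i):=A_{1,i}-A_{0,i},\quad \psi(j):=B_{1,j}-B_{0,j}.\]
Hence $(i,j)$ is tight in both layers when $\phi(i)+\psi(j)=0$, in layer $0$ only when $\phi(i)+\psi(j)<0$, and in layer $1$ only when $\phi(i)+\psi(j)>0$. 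I will use that for each fixed $p$ the $bc$ vertices $v_{(p,i,j)}$ are affinely independent, since their block-$23$ coordinates $E^{23}_{(i,j)}$ are distinct standard basis vectors. The plan is to deduce from the full-span condition that the separable function $\phi\oplus\psi$ takes at most two values on each of $[b]$ and $[c]$: the ``high'' sets $S=\{i:\phi(i)\text{ large}\}\subseteq[b]$ and $T=\{j:\psi(j)\text{ large}\}\subseteq[c]$ then reproduce exactly the $(S,T)$-cut inequality above, while the degenerate cases where $\phi$ or $\psi$ is constant collapse to one of the coordinate inequalities.

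The main obstacle is precisely this rigidity case analysis: ruling out ``sporadic'' facets requires showing that any third level in $\phi$ or $\psi$, or any non-separable slack in $C$ on the tight set, forces the affine dimension of the tight cells below $b+c+bc-3$. I expect this to be the delicate step, carried out by an affine-independence computation on the vertices $v_{(p,i,j)}$ organized by the sign pattern of $\phi(i)+\psi(j)$ and using the layerwise independence just noted. Once the classification is complete, it remains to check (i) that each coordinate inequality and each $(S,T)$-cut is genuinely facet-defining, again an affine-independence count using $b,c\ge3$ so that $S,S^c,T,T^c$ are all non-empty, and (ii) that distinct data $(S,T)$, together with the coordinate inequalities, yield pairwise inequivalent facets. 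Combining the resulting upper bound $(2^b-2)(2^c-2)+2b+2c+bc$ with the lower bound of Theorem~\ref{main-theo} gives the exact count.
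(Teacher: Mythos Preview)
The statement you are addressing is a \emph{conjecture} in the paper, not a theorem; the paper offers no proof, only computational evidence (Table~\ref{tab:facet-numbers}) and the lower bound of Theorem~\ref{main-theo}. So there is no ``paper's own proof'' to compare against, and any correct argument would be a genuine advance.

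Your proposal, however, is a plan rather than a proof. You correctly observe that Theorem~\ref{main-theo} with $a=2$ already gives the lower bound, so the entire content is the upper bound: every facet is either a coordinate facet or a checkerboard facet. Your reduction to the separable difference $f_1-f_0=\phi(i)+\psi(j)$ is a nice structural observation specific to $a=2$, and the layerwise affine independence of the $v_{(p,i,j)}$ is correct. But the crucial step --- showing that the facet condition forces $\phi$ and $\psi$ to be (essentially) two-valued and that $C_{i,j}$ is then determined up to the affine relations --- is not carried out; you explicitly write ``I expect this to be the delicate step.'' That expectation is not a proof. In particular, you have not explained why a third level in $\phi$ or $\psi$, or a non-separable pattern in $C$, cannot produce a tight set of affine dimension $b+c+bc-3$; the sign pattern of $\phi(i)+\psi(j)$ only tells you which layer each tight cell sits in, not that the tight cells themselves form a checkerboard.

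There is also a gap in the degenerate case. When $\phi\oplus\psi$ is constant (so the tight set is the same in both layers), you assert this ``collapses to one of the coordinate inequalities,'' but you would need to show that the common tight set in $[[b]]\times[[c]]$ is the complement of a single cell; a priori it is only some subset $M\subsetneq[[b]]\times[[c]]$ with $f_0\equiv\beta$ on $M$, and you must argue that the facet dimension forces $|M|=bc-1$ and that the corresponding functional is equivalent modulo $\aff(P(2,b,c))$ to a coordinate functional (of type $E^{23}$, or one of the $E^{12},E^{13}$ types, which you have not matched to any sign pattern of $\phi,\psi$). Until these classification arguments are actually executed, the proposal does not establish the conjecture.
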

The proof of Theorem~\ref{main-theo} will be given in the remainder of
this paper.  We are going to describe explicitly a set of facets for
$P(a,b,c)$.

\subsubsection{Setting and outline of the proof of Theorem~\ref{main-theo}}

From  now  on let $a,b,c$ be pairwise coprime positive integers. Let $n = ab+ac+bc$, and $G \le S_n$ be generated by the product $g$ of
three disjoint cycles of lengths $ab$, $ac$ and $bc$. In the following
we will always identify $P(a,b,c)$ with $\pi(P(a,b,c))$, as described
in ~\ref{proj-subs}. In particular, any element of $G$ will be
considered as a vector in $\R^{ab+ac+bc}$ having coordinates $x_0,
\ldots, x_{ab-1}$, $y_0, \ldots, y_{ac-1}$, and $z_0, \ldots, z_{bc-1}$. For
$u\in\R^{ab+ac+bc}$, we let $\pi_x(u)$, $\pi_y(u)$, and $\pi_z(u)$ be
the projections onto the $x$-, $y$-, and $z$-coordinates,
respectively.

\begin{prop}
The inequalities
\begin{align*}
  x_i&\ge 0& y_j&\ge 0& z_k&\ge 0
\end{align*}
define facets of $P(a,b,c)$.
\end{prop}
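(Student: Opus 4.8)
The plan is to show, for each coordinate inequality, that it is valid on all vertices of $P(a,b,c)$ and that the vertices lying on the corresponding hyperplane span an affine subspace of dimension $\dim P(a,b,c)-1$. By the cyclic symmetry and the analogous treatment of the three blocks, it suffices to treat one inequality, say $x_0\ge 0$; the others follow by applying powers of $g$ (which permute the $x$-coordinates cyclically, and similarly for $y$ and $z$), since $G$ acts on $P(a,b,c)$ by lattice automorphisms. Validity is immediate: every vertex of $P(a,b,c)$ is $\pi(\prmat(g^k))$ for some $k$, and all entries of a permutation matrix — hence all coordinates $x_i,y_j,z_k$ — are $0$ or $1$.

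The heart of the argument is the dimension count. First I would identify exactly which vertices satisfy $x_0=0$. Writing $g$ as the product of disjoint cycles $z_{ab}z_{ac}z_{bc}$ on the $x$-, $y$-, $z$-blocks respectively, the $x$-block of $g^k$ is the permutation matrix of $z_{ab}^k$, and its first row has a $1$ in position $k\bmod ab$; so $x_0(g^k)=1$ exactly when $ab\mid k$, i.e. $x_0(g^k)=0$ for all $k$ not divisible by $ab$. Thus the face $\{x_0=0\}\cap P(a,b,c)$ has vertex set $\{\pi(\prmat(g^k)) : 0\le k<abc,\ ab\nmid k\}$, which is $P(a,b,c)$ with exactly $c$ of its $abc$ vertices removed (those with $k=0,ab,2ab,\dots,(c-1)ab$). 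I would then show these $abc-c$ vertices affinely span a hyperplane inside $\aff(P(a,b,c))$. Since $\dim P(a,b,c)=ab+ac+bc-a-b-c$ by Corollary~\ref{formula}, I must exhibit $ab+ac+bc-a-b-c$ affinely independent vertices among them. One clean way: the removed vertices are $\pi(\prmat(h^j))$, $0\le j<c$, where $h:=g^{ab}$; the cyclic group $\langle h\rangle$ has order $c$ (as $\gcd(ab,abc)=ab$), its $x$-block is trivial, and its $y$- and $z$-blocks are products of cycles of lengths $\gcd(ac,ab\cdot\mathbb{Z})$-type — more precisely $h$ acts on the $y$-block as $z_{ac}^{ab}$, a product of $a$ cycles of length $c$, and on the $z$-block as $z_{bc}^{ab}$, a product of $b$ cycles of length $c$. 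So the removed vertices span (an affine copy of) $\aff(P(\langle h\rangle))$, a polytope of dimension $c-1$ by the dimension formula (the nontrivial $c$-th roots of unity appear). Removing the vertices of a subpolytope $Q$ of dimension $c-1$ from $P(a,b,c)$ can drop the affine dimension of the remaining vertex set by at most $\dim Q+1=c$, and one checks it drops by \emph{exactly} $1$ here, because $P(a,b,c)$ minus these $c$ vertices still contains, e.g., the identity and enough other vertices off the hyperplane $x_0=0$ — indeed $\{x_0=0\}$ is a genuine supporting hyperplane.

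A more hands-on alternative, which I would probably use to make the dimension count airtight, is to produce the affinely independent vertices directly. Fix the vertex $v_0=\pi(\prmat(g))$ (which has $x_0=0$). For the differences $v-v_0$ as $v$ ranges over the remaining vertices with $x_0=0$, I want to see the linear span has dimension $\dim P(a,b,c)-1=(ab-1)+(ac-1)+(bc-1)-(a-1)-(b-1)-(c-1)-1$; equivalently, since $\aff(P(a,b,c))$ is cut out of $\R^n$ precisely by the ``row-sum $=1$ in each block'' equations plus the Guralnick–Perkinson relations from the repeated eigenvalues, I need to check that imposing the single further equation $x_0=0$ is consistent and independent. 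This is a finite linear-algebra verification over $\Q$: the coordinates $x_i(g^k)=[k\equiv i \bmod ab]$ etc. are explicit, so the matrix of vertex coordinates is completely concrete, and one shows its rank over the hyperplane $x_0=0$ equals the full rank minus one. The main obstacle, and the only place requiring care, is precisely this rank computation — ensuring that deleting the $c$ vertices with $ab\mid k$ does not accidentally collapse the dimension by more than one. I expect this to follow because those $c$ vertices, lying in the low-dimensional subpolytope $P(\langle g^{ab}\rangle)$, are ``almost all'' on the hyperplane's complement side in a controlled way; concretely, after choosing a basis adapted to the block structure, the $x_0$-functional restricted to $\aff(P(a,b,c))$ is not identically zero and not a consequence of the defining equations, which is exactly the statement that $x_0=0$ defines a face of codimension one, i.e. a facet.
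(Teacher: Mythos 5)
Your setup is fine: validity of $x_i\ge 0$ is immediate, and you correctly identify the vertex set of the face $\{x_0=0\}$ as $\{g^k : ab\nmid k\}$, so that exactly the $c$ vertices of the coset/subgroup $\langle g^{ab}\rangle$ are removed. But the entire content of the proposition is the codimension-one count, and at precisely that point both of your routes stop short. In route (a), the bound ``removing a set of affine dimension $c-1$ drops the dimension by at most $c$'' is true but far from the needed ``drops by exactly $1$,'' and your justification for the latter is circular: you appeal to ``$\{x_0=0\}$ is a genuine supporting hyperplane,'' which only says the face is proper, and to the remaining vertex set ``still containing the identity,'' which is false --- $e=g^0$ has $x_0=1$ and is one of the removed vertices. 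In route (b), the final reduction contains a conflation: showing that the functional $x_0$ is non-constant on $\aff(P(a,b,c))$ and independent of the equations cutting out the affine hull only shows that $\aff(P(a,b,c))\cap\{x_0=0\}$ is a hyperplane \emph{in} $\aff(P(a,b,c))$; it does not show that the vertices lying on that hyperplane affinely span all of it, which is what ``facet'' means. The rank computation you explicitly defer is exactly the missing proof, so the argument as written establishes only that each $x_i\ge 0$ defines a proper face.

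The paper closes this gap with a concrete certificate that some version of your argument would also need: it writes the barycenter $\hat G=\frac{1}{abc}\sum_{h\in G}h$ explicitly as an affine combination of the vertices of the face together with a single vertex $v_0$ outside it, with inclusion--exclusion coefficients depending only on the divisibility of the exponent by $a$, $b$, $c$ (Table~\ref{tab:vb}). Since multiplication by $g^{ab}$ preserves the face, fixes $\hat G$, and permutes the $c$ outside vertices transitively, the same holds for every outside vertex $v$; evaluating the relevant coordinate shows the coefficient of $v$ is $1/(ab)\neq 0$, so every vertex of $P(a,b,c)$ lies in $\aff(V_F\cup\{\hat G\})$, where $V_F$ is the vertex set of the face. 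Hence $\dim\aff(V_F)\ge\dim P(a,b,c)-1$, and the reverse inequality is clear. To repair your proof you would either have to produce such an explicit affine relation or exhibit $ab+ac+bc-a-b-c$ affinely independent vertices with $x_0=0$; neither is done in the proposal.
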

\begin{proof}
It suffices to prove that these faces are facets. For this, we will show
that for any vertex $g^m$ outside of such a face $F$ we can write 
\[\hat G := \frac{1}{abc} \sum_{g \in G} g\] 
as an affine combination of vertices of the
face together with the given vertex.

Up to symmetry, we may assume that the face $F$ of concern is given by
$x_1 \ge 0$. In particular, it contains all vertices $g^k$ such that $k$ is divisible by $a$ or $b$. 
The vertices outside of $F$ are of the form $g^m$ for $m\equiv 1 \ (\mod\ ab)$. 
Again, up to symmetry, we can choose $m$ such that $m\equiv 0 \ (\mod\ c)$. 
Now, Table~\ref{tab:vb} gives the coefficients of $\hat G$ as an
affine combination of all vertices $g^k$ such that $k$ is divisible by $a$, $b$ or $c$. 

Here is how the reader can check its validity: 
For instance, the projection on the $x$-coordinates of $abc \ \hat G$ 
equals $(c \cdots c) \in \R^{ab}$. Let's consider the $x$-coordinate corresponding to $0 \ (\mod\ a)$ and 
$1 \ (\mod\ b)$. There are $c$ vertices $g^k$ in this equivalence class, $c-1$ not divisible by $c$ and one divisible by $c$. 
By the first and fifth rows of Table~\ref{tab:vb}, this coordinate of the affine combination equals 
\[(c-1) a+(a+c-ac)=c.\]
In the same manner, the statement can be verified for any coordinate.
\newcommand{\yes}{\text{yes}}
\newcommand{\no}{\text{no}}
\begin{table}[t]
  \centering
  \begin{tabular}{>{$}c<{$}>{$}c<{$}>{$}c<{$}|>{$}c<{$}|>{$}c<{$}>{$}c<{$}>{$}c<{$}}
    \multicolumn{3}{c|}{k \text{ divisible by}} &
    \multicolumn{3}{l}{\quad\raisebox{-3mm}[0pt][0pt]{coefficient of $g^k$ times $abc$}} 
    & \quad\raisebox{-3mm}[0pt][0pt]{no.\ vertices of this type}\\
    a & b & c & \\
    \hline
    \yes & \no & \no & a & (b-1)(c-1)\\ 
    \no & \yes & \no & b & (a-1)(c-1)\\ 
    \no & \no & \yes & c & (a-1)(b-1)\\ 
    \yes & \yes & \no & a+b-ab & c-1\\ 
    \yes & \no & \yes & a+c-ac & b-1\\ 
    \no & \yes & \yes & b+c-bc & a-1\\ 
    \yes & \yes & \yes & abc-ab-ac-bc+a+b+c & 1\\ 
  \end{tabular}\\\mbox{ }
  \caption{Coefficients of the vertex barycenter}
\label{tab:vb}
\end{table}
\end{proof}

We say that a facet is \emph{essential}, if it is not of the type $x_i
\ge 0$, $y_j \ge 0$, or $z_k \ge 0$.
There are $n=ab+ac+bc$ non-essential facets.  We want to define a large family of essential facets of $P(a,b,c)$. 
The next subsection defines a certain class of subsets of $[[abc]]$ via projections onto 
the $x$-, $y$-, and $z$-coordinates. In Lemma~\ref{face-lemma} we give
a general criterion when such a set defines a face of $P(a,b,c)$. The final subsection gives an explicit construction of sets that
satisfy the conditions of the lemma. We prove that our vertex sets
define facets and count their number.

\subsubsection{Faces as unions of preimages of projection maps}

Throughout, we will identify
$[[abc]]$ and $G$ via the natural bijection $i \mapsto g^i$.
The Chinese remainder theorem yields a bijection between
$[[abc]]$ and $[[a]] \times [[b]] \times [[c]]$ by mapping $k$ to $(k
\ (\mod\ a), k \ (\mod\ b), k \ (\mod\ c))$. In the same way, we
identify $[[ab]]$ and $[[a]] \times [[b]]$,  $[[ac]]$ and $[[a]]
\times [[c]]$, and $[[bc]]$ and $[[b]] \times [[c]]$.

To any proper subset $S_x \subsetneq [[ab]]$ we associate a subset of
$[[abc]]$ via
\[F_x(S_x) := \pi_x^{-1}(\{e_i \;:\; i \in S_x\}) \ = \ \bigcup\limits_{x
  \in S_x} x \times [[c]] \ \subsetneq \ [[abc]]\;,\] where $e_0,
\ldots, e_{ab-1}$ is the standard basis of $\R^{ab}$. This is (the
vertex set of) a face of $P(a,b,c)$, given by setting $x_i=0$ for $i
\not\in S_x$.  Similarly, we define $F_y(S_y)$ and $F_z(S_z)$ for
subsets $S_y\subsetneq [[ac]]$ and $S_z\subsetneq [[bc]]$. 

In the following we want to consider unions of the form $F_x(S_x) \cup
F_y(S_y) \cup F_z(S_z)$ for $S_x\subsetneq [[ab]]$, $S_y\subsetneq [[ac]]$, $S_z\subsetneq [[bc]]$. In general, this is not the vertex set of a
face. However, the following lemma gives a sufficient criterion.

\begin{lemma}\label{face-lemma}
  Let $S_x \subsetneq [[ab]]$, $S_y\subsetneq [[ac]]$ and $S_z
  \subsetneq [[bc]]$.  If 
  \begin{align}
    F_x(S_x) \cap F_y(S_y) \cap F_z(S_z) = \emptyset\,,\label{eq:face-lemma:1}
  \end{align}
and if for all permutations $(i,j,k)$ of $(x,y,z)$ 
\begin{align}
  F_i(S_i) \ \cap \pi_k^{-1}(\pi_k(F_i(S_i) \cap F_j(S_j))) \ \subseteq
  \ F_j(S_j)\,,\label{eq:face-lemma:2}
\end{align}
  then $F_x(S_x) \cup F_y(S_y) \cup F_z(S_z)$ is the vertex set of a (not
  necessarily proper) face of $P(a,b,c)$.
\end{lemma}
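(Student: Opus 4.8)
The strategy is to exhibit a single linear functional $\varphi$ on $\R^{ab+ac+bc}$ together with a constant $c_0$ such that $\varphi(v) \le c_0$ holds for every vertex $v$ of $P(a,b,c)$, with equality precisely on $F_x(S_x) \cup F_y(S_y) \cup F_z(S_z)$. The natural candidate is to take $\varphi$ supported on the coordinates complementary to the $S_i$'s, i.e.\ roughly $\varphi(u) = -\bigl(\sum_{i \notin S_x} (\pi_x u)_i + \sum_{j \notin S_y}(\pi_y u)_j + \sum_{k \notin S_z}(\pi_z u)_k\bigr)$, possibly with a nonnegative weighting, and $c_0 = 0$. Every vertex $g^m$ of $P(a,b,c)$ has exactly one nonzero entry (equal to $1$) in each of the three blocks, at positions $m \bmod ab$, $m \bmod ac$, $m \bmod bc$; so $\varphi(g^m)$ equals minus the number (or weighted number) of the three blocks in which $g^m$ lies outside the prescribed coordinate set. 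Hence $\varphi(g^m) \le 0$ always, and $\varphi(g^m) = 0$ exactly when $g^m$ belongs to $F_x(S_x)$ in the $x$-block, to $F_y(S_y)$ in the $y$-block, and to $F_z(S_z)$ in the $z$-block \emph{simultaneously}. The point of the lemma's hypotheses is to force this triple condition to collapse to membership in the \emph{union}, which is what we want to be the face.

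Concretely, I would first record the combinatorial reformulation: identifying $g^m$ with $m \in [[abc]]$, the vertex $g^m$ lies in $F_x(S_x)$ iff $m \bmod ab \in S_x$, and likewise for $y,z$; and $F_x(S_x) \cap F_y(S_y)$ is the set of $m$ with $m\bmod ab \in S_x$ and $m \bmod ac \in S_y$, etc. Then I would show that under \eqref{eq:face-lemma:1} and \eqref{eq:face-lemma:2} a vertex $g^m$ that lies in at least two of the three sets already lies in the union in a "clean" way: if $g^m \in F_x(S_x) \cap F_y(S_y)$, then since $\pi_z(g^m)$ is determined by $m \bmod bc$ and $m \in \pi_z^{-1}(\pi_z(F_x(S_x)\cap F_y(S_y)))$, hypothesis \eqref{eq:face-lemma:2} (with the permutation taking $i=x$, $j=y$, $k=z$) gives nothing new, but the contrapositive usage is: a vertex can fail to be a common zero of $\varphi$ on all three blocks while still being forced into the correct block structure. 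The key deduction is that for any vertex, the set of blocks $\{x,y,z\}$ in whose prescribed coordinate set it lies is either empty, a singleton, or all three — never exactly two — precisely because \eqref{eq:face-lemma:2} says that lying in two of the $F_i(S_i)$'s (so that two block-coordinates are "good") forces the third to be "good" as well via the preimage condition, while \eqref{eq:face-lemma:1} rules out the degenerate all-three intersection being larger than expected. This dichotomy is exactly what makes $\{\varphi = 0\}$ equal the union.

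With that combinatorial fact in hand, the verification that the union is a face is short: define the supporting hyperplane $H = \{\varphi = 0\}$ with $\varphi$ as above (one can take all weights equal to $1$), note $\varphi \le 0$ on all vertices hence on all of $P(a,b,c)$, and observe that a vertex $g^m$ satisfies $\varphi(g^m)=0$ iff $g^m$ lies in the prescribed coordinate set in \emph{every} block, which by the dichotomy is iff it lies in the prescribed set in \emph{some} block, i.e.\ iff $g^m \in F_x(S_x)\cup F_y(S_y)\cup F_z(S_z)$. Since a polytope's intersection with a supporting hyperplane is a face whose vertices are exactly the vertices of the polytope lying on that hyperplane, we conclude. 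The face may of course be improper (equal to all of $P(a,b,c)$) if the union happens to contain every vertex, which is why the statement allows this.

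**Main obstacle.** The only genuinely delicate point is proving the dichotomy "no vertex lies in exactly two of the three $F_i(S_i)$'s" from hypotheses \eqref{eq:face-lemma:1} and \eqref{eq:face-lemma:2}. One must unwind the preimage condition carefully: if $g^m \in F_x(S_x) \cap F_y(S_y)$ but $g^m \notin F_z(S_z)$, one has to produce from \eqref{eq:face-lemma:2} (applied to a suitable permutation) a contradiction — the subtlety is that \eqref{eq:face-lemma:2} as written is an inclusion $F_i(S_i) \cap \pi_k^{-1}(\pi_k(F_i(S_i)\cap F_j(S_j))) \subseteq F_j(S_j)$, and one needs to see that $g^m$ itself (or the residue $m\bmod ab$ lifted appropriately) lands in the left-hand side, forcing it into $F_j(S_j)$, and then to iterate/combine the three permutations together with \eqref{eq:face-lemma:1} to rule out the two-block case entirely. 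Getting the bookkeeping of the Chinese-remainder identifications right here is where the care is needed; everything else is a formal consequence of the supporting-hyperplane argument.
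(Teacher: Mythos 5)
There is a fatal gap, and it lies exactly where you flagged the ``only genuinely delicate point.'' Your functional $\varphi$ vanishes on a vertex $g^m$ precisely when $g^m$ lies in the prescribed coordinate set in \emph{all three} blocks, i.e.\ precisely when $g^m\in F_x(S_x)\cap F_y(S_y)\cap F_z(S_z)$; but hypothesis \eqref{eq:face-lemma:1} says this intersection is \emph{empty}, so $\{\varphi=0\}$ cuts out the empty face, not the union. You try to rescue this with the ``dichotomy'' that a vertex lies in none, exactly one, or all three of the sets $F_i(S_i)$, never exactly two. That dichotomy is false: hypothesis \eqref{eq:face-lemma:2} does not say that membership in two of the sets forces membership in the third (which would anyway contradict \eqref{eq:face-lemma:1} whenever a pairwise intersection is nonempty); it says that a point of $F_i(S_i)$ whose $k$-block coordinate agrees with that of \emph{some} point of $F_i(S_i)\cap F_j(S_j)$ must already lie in $F_j(S_j)$. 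In the explicit construction of Proposition~\ref{checker}, a vertex $(i,j,k)$ with $i\in I$, $j\in J^c$, $k\in K^c$ lies in $F_x(S_x)\cap F_y(S_y)$ but not in $F_z(S_z)$, so vertices belonging to exactly two of the sets genuinely occur, and your hyperplane misses them.

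The missing idea is that the supporting functional needs \emph{positive} correction entries in addition to the negative ones, and a threshold of $-1$ rather than $0$. The paper's choice is $\lambda^{(i)}_m=-1$ for $m\in S_i$, $\lambda^{(i)}_m=+1$ for $m\in\pi_i\bigl(F_j(S_j)\cap F_k(S_k)\bigr)$, and $0$ otherwise; hypothesis \eqref{eq:face-lemma:1} guarantees these two cases are disjoint, so $\lambda$ is well defined. Then a vertex in exactly one of the sets contributes $-1+0+0=-1$ (the two vanishing terms are exactly where \eqref{eq:face-lemma:2} is used), a vertex in exactly two contributes $-1-1+1=-1$ (the $+1$ coming from its $k$-block coordinate lying in $\pi_k(F_i(S_i)\cap F_j(S_j))$), a vertex in all three cannot exist by \eqref{eq:face-lemma:1}, and a vertex in none contributes $\geq 0>-1$. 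Note also that no reweighting of your functional can work: if vertices of the types ``in exactly one set'' and ``in exactly two sets'' both occur, forcing them all onto a common level set of a nonnegative combination of the complementary coordinate sums forces all weights to zero. So the gap is not a bookkeeping issue but requires the extra $+1$ terms.
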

\begin{proof}
The first assumption implies that 
\begin{align*}
  S_x &\cap \pi_x(F_y(S_y) \cap F_z(S_z)) = \emptyset\,,\\
  S_y &\cap \pi_y(F_x(S_x) \cap F_z(S_z)) = \emptyset\,,\\
  \text{and}\qquad S_z &\cap \pi_z(F_x(S_x) \cap F_y(S_y)) = \emptyset\,.
\end{align*}
We define a functional $\lambda = (\vlx, \vly, \vlz) \in \R^n$ in the
following way. Let $I_x:=[[ab]], I_y:=[[ac]]$ and $I_z:=[[bc]]$. For
all permutations $(i,j,k)$ of $(x,y,z)$ we define 
\begin{align*}
  \lambda^{(i)}_m&:=
  \begin{cases}
    -1\qquad  & m \in S_i\\
    \phantom{-}1\qquad & m \in \pi_i(F_j(S_j) \cap F_k(S_k))\\
    \phantom{-}0\qquad & \text{else}\;.
  \end{cases}
\end{align*}
Let $\pro{\cdot}{\cdot}$ by the standard scalar product on $\R^n$ and
$v \in G$.  Using assumptions (\ref{eq:face-lemma:1}) and
(\ref{eq:face-lemma:2}) it is straightforward to check that
$\pro{\lambda}{v} \geq -1$, with equality if and only if $v \in F_x(S_x)
\cup F_y(S_y) \cup F_z(S_z)$.
\end{proof}

\subsubsection{Explicit constructions of facets}

\begin{prop}\label{checker}
Given three non-trivial subsets 
$\emptyset \neq I \subsetneq [[a]]$,
$\emptyset \neq J \subsetneq [[b]]$, and
$\emptyset \neq K \subsetneq [[c]]$, 
the set 
\[([[a]] \times [[b]] \times [[c]])
\setminus (I \times J \times K) 
\setminus (I^c \times J^c \times K^c)\]
is the set of vertices of a facet of $P(a,b,c)$.
\end{prop}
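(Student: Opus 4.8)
The strategy is to verify the hypotheses of Lemma~\ref{face-lemma} for the sets
\[
S_x := [[ab]] \setminus \big(\pi_x(I\times J\times K) \cup \pi_x(I^c\times J^c\times K^c)\big),\qquad
\text{and analogously }S_y, S_z,
\]
where we use the Chinese remainder identifications $[[ab]]\cong[[a]]\times[[b]]$, etc. Concretely, identifying $[[abc]]$ with $[[a]]\times[[b]]\times[[c]]$, the complement $F_x(S_x)$ of the face $\{x_i\ge 0 : i\notin S_x\}$ consists of those triples $(\alpha,\beta,\gamma)$ whose $(\alpha,\beta)$-shadow lies in $I\times J$ or in $I^c\times J^c$; similarly for $F_y(S_y)$ (shadow in $I\times K$ or $I^c\times K^c$) and $F_z(S_z)$ (shadow in $J\times K$ or $J^c\times K^c$). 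The first step is therefore to make this dictionary precise and check that each $S_i$ is a proper subset of the relevant index set (using $\emptyset\neq I\subsetneq[[a]]$ and the coprimality/CRT identifications so that, e.g., no fiber $x\times[[c]]$ is accidentally forced out).

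The second step is to verify condition~\eqref{eq:face-lemma:1}: a triple $(\alpha,\beta,\gamma)$ lies in $F_x(S_x)\cap F_y(S_y)\cap F_z(S_z)$ iff each of the three pairwise shadows is ``monochromatic'' in the sense above — i.e. $(\alpha,\beta)$ is of type $(I,J)$ or $(I^c,J^c)$, $(\alpha,\gamma)$ is of type $(I,K)$ or $(I^c,K^c)$, $(\beta,\gamma)$ is of type $(J,K)$ or $(J^c,K^c)$. A short case analysis on whether $\alpha\in I$ (resp. $\beta\in J$, $\gamma\in K$) shows these three conditions are simultaneously satisfiable exactly when $(\alpha,\beta,\gamma)\in I\times J\times K$ or $(\alpha,\beta,\gamma)\in I^c\times J^c\times K^c$ — but those are precisely the triples we removed, so the triple intersection is empty. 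This is the cleanest part; the bookkeeping is the only thing to watch.

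The third and main step is condition~\eqref{eq:face-lemma:2}, which I expect to be the principal obstacle because it is the genuinely asymmetric requirement. Take, say, the permutation $(i,j,k)=(x,y,z)$: one must show that if $v\in F_x(S_x)$ and $\pi_z(v)$ agrees with $\pi_z(w)$ for some $w\in F_x(S_x)\cap F_y(S_y)$, then $v\in F_y(S_y)$. Writing $v=(\alpha,\beta,\gamma)$ and $w=(\alpha',\beta',\gamma)$ with the same last coordinate $\gamma$, the hypothesis ``$w\in F_x(S_x)\cap F_y(S_y)$'' forces $(\alpha',\gamma)$ to be of type $(I,K)$ or $(I^c,K^c)$, which in particular pins down whether $\gamma\in K$; combined with $v\in F_x(S_x)$ (so $(\alpha,\beta)$ is of type $(I,J)$ or $(I^c,J^c)$) one deduces the ``color'' of $\alpha$, hence via the now-known color of $\gamma$ that $(\alpha,\gamma)$ is itself monochromatic, i.e. $v\in F_y(S_y)$. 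The subtle point is that this argument must close for all six permutations of $(x,y,z)$ simultaneously and use only the product structure $I\times J\times K$ / $I^c\times J^c\times K^c$; the coprimality of $a,b,c$ enters only through the CRT bijections, not the combinatorics. Once Lemma~\ref{face-lemma} applies, $F_x(S_x)\cup F_y(S_y)\cup F_z(S_z)$ is the vertex set of a face $F$, and the removed set $(I\times J\times K)\cup(I^c\times J^c\times K^c)$ is exactly the set of vertices \emph{not} in $F$.

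Finally, we must check that this face is a \emph{facet}, i.e. has dimension $\dim P(a,b,c)-1$. Since $P(a,b,c)$ has a complete vertex-edge graph (Corollary~\ref{complete}) and every vertex not on $F$ is connected by edges to vertices on $F$, it suffices to show $F$ has full-dimensional vertex set in its affine span — equivalently, that removing the $|I||J||K|+|I^c||J^c||K^c|$ excluded vertices drops the dimension by exactly one. The plan is to exhibit $\dim P(a,b,c)$ affinely independent vertices inside $F$: using the lattice-point/affine structure via $\pi$, one shows that the functional $\lambda$ constructed in the proof of Lemma~\ref{face-lemma} is, up to the non-essential facet functionals $x_i\ge 0$ etc., the unique supporting functional for $F$, so $F$ cannot lie in a face of larger dimension; alternatively, one directly counts that the rank of the vertex set of $F$ equals the rank of all of $G$ minus one, by observing that the two excluded ``blocks'' each span a subspace meeting the rest in codimension controlled by the product structure. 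Verifying this rank/dimension count is the second place where care is needed, but it is routine linear algebra over $\Z$ once the vertex set of $F$ is described explicitly as above.
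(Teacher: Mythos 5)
Your strategy for the ``face'' half coincides with the paper's: your $S_x=[[ab]]\setminus\bigl((I\times J)\cup(I^c\times J^c)\bigr)$ is exactly the paper's choice $S_x=(I\times J^c)\cup(I^c\times J)$, and feeding these sets into Lemma~\ref{face-lemma} is the right move. But your bookkeeping is inverted: $F_x(S_x)=\pi_x^{-1}(S_x)$ consists of the triples whose $(\alpha,\beta)$-shadow lies \emph{in} $S_x$, i.e.\ in $I\times J^c\cup I^c\times J$ (``bichromatic''), not in $I\times J\cup I^c\times J^c$. With your monochromatic description the triple intersection $F_x(S_x)\cap F_y(S_y)\cap F_z(S_z)$ would be exactly $(I\times J\times K)\cup(I^c\times J^c\times K^c)$, which is nonempty since $I,J,K\neq\emptyset$; the sentence ``those are precisely the triples we removed, so the triple intersection is empty'' is a non sequitur, because the $F_i$ are subsets of all of $[[abc]]$ and nothing has been removed from that ground set. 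With the correct bichromatic reading, condition~\eqref{eq:face-lemma:1} is immediate (three pairwise distinct values of a two-valued colouring cannot occur), and condition~\eqref{eq:face-lemma:2} follows by the transitivity argument you sketch --- note, however, that $\pi_z(v)=\pi_z(w)$ forces $w=(\alpha',\beta,\gamma)$ to share \emph{both} the $\beta$- and the $\gamma$-coordinate with $v$, and your chain of deductions needs the shared $\beta$ to close.

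The genuine gap is the last step. Proving that this face is a \emph{facet} is the actual content of the proposition (cf.\ Remark~\ref{comment}: these checkerboard inequalities were known, but not known to be facet-defining), and your proposal does not prove it: completeness of the vertex-edge graph carries no dimension information, and the assertion that the rank count is ``routine linear algebra over $\Z$'' is precisely what has to be established. The paper's argument is concrete: fix any vertex $v_0=(i_0,j_0,k_0)\notin V$, say in $I\times J\times K$, and show that every vertex $v_1$ lies in $\aff(V\cup\{v_0\})$. For $v_1=(i_1,j_1,k_1)\in I^c\times J^c\times K^c$ one has the explicit identity
\[
v_1 \;=\; v_0 - (i_0,j_0,k_1)-(i_0,j_1,k_0)-(i_1,j_0,k_0)+(i_1,j_1,k_0)+(i_1,j_0,k_1)+(i_0,j_1,k_1),
\]
valid in $\R^{ab+ac+bc}$ because each block-projection $\pi_x,\pi_y,\pi_z$ of the right-hand side telescopes; each of the six correction terms mixes a coordinate from $(i_0,j_0,k_0)$ with one from $(i_1,j_1,k_1)$, hence lies in neither $I\times J\times K$ nor $I^c\times J^c\times K^c$ and so belongs to $V$. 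The remaining case $v_1\in I\times J\times K$ reduces to this one via an auxiliary vertex $v_2\in I^c\times J^c\times K^c$. Without an argument of this kind (or an honest rank computation), your proof is incomplete at its most important point.
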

\begin{proof}
We set
\begin{alignat*}{4}
S_x &:= I \times J^c &&\cup I^c \times J &&\subset [[a]] \times [[b]]
  \cong [[ab]]\,,\\
S_y &:= I \times K^c &&\cup I^c \times K &&\subset [[a]] \times [[c]]
  \cong [[ac]]\,\\
S_z &:= J \times K^c &&\cup J^c \times K &&\subset [[b]] \times [[c]]
  \cong [[bc]]\,\\
\end{alignat*}
Then $S_x$, $S_y$, $S_z$ satisfy the conditions of
Lemma~\ref{face-lemma}. The resulting face $F_x(S_x) \cup F_y(S_y) \cup F_z(S_z)$ has 
the vertex set $V$ as given in the statement. We claim that this face is, in fact, a facet. To prove this claim, 
let $v_0 \not\in V$ be an additional vertex of $P(a,b,c)$. We show
that any other vertex $v_1$ of $P(a,b,c)$ can be written as an affine
combination of elements of $V$ together with $v_0$.

As before, we identify the elements of $G$ with triples $(i,j,k) \in
[[a]]\times[[b]]\times[[c]]$. We can assume that $v_0 = (i_0,j_0,k_0) \in I \times J \times K$.
Then either $v_1 = (i_1,j_1,k_1) \in I \times J \times K$ as well, or
$v_1 \in I^c \times J^c \times K^c$.

In the latter case, we see that we have 
$v_1 = v_0 
- (i_0,j_0,k_1)
- (i_0,j_1,k_0)
- (i_1,j_0,k_0)
+ (i_1,j_1,k_0)
+ (i_1,j_0,k_1)
+ (i_0,j_1,k_1)$,
where the last six vertices all belong to $V$. When verifying this statement, the reader should beware that 
this is actually a sum of elements in $\R^{ab+ac+bc}$.

In the former case, we choose $v_2 \in I^c \times J^c \times K^c$, and
construct combinations $v_0 = v_2 + w_0$, $v_1 = v_2 + w_1$, where
$w_0$ and $w_1$ are combinations of elements of $V$ with vanishing
coefficient sum. But then $v_1 = v_0 - w_0 + w_1$ yields the desired
affine representation.
\end{proof}

Finally, let us count the number of different facets we obtain in this
way. We have $(2^a-2)(2^b-2)(2^c-2)$ different choices for $I,J,K$.
Simultaneously exchanging all three sets by their complements yields
the same facet, so the facet depends only on the pairs $(I,I^c)$,
$(J,J^c)$ and $(K,K^c)$.  On the other hand, the set $S:=(I\times
J\times K)\cup (I^c\times J^c\times K^c)$ already determines these
pairs: If $(i,j,k)\in S$, then either $I=\{i'\in [[a]]\mid (i',j,k)\in
S\}$ or $I^c=\{i'\in [[a]]\mid (i',j,k)\in S\}$, and similarly for
$(J,J^c)$ and $(K,K^c)$. Hence, we get $(2^a-2)(2^b-2)(2^c-2)/2$
different facets of this type, and all of these facets are essential
by construction. This finishes the proof of Theorem~\ref{main-theo}.

\begin{remark}
\label{comment}
It is possible to trace the inequalities described in
Lemma~\ref{face-lemma} back to the `cycle inequalities' in
\cite{SJ08}: face inequalities of so-called {\em marginal polytopes}.
In particular, the `checkerboard inequalities' in
Proposition~\ref{checker} may be found in that paper.
However, it is not shown in \cite{SJ08} that they actually define
facets. The precise relation of permutation polytopes to marginal
polytopes \cite{KWA09,SJ08} will be investigated in an upcoming paper
\cite{Marginalspaper}.
\end{remark}

\providecommand{\bysame}{\leavevmode\hbox to3em{\hrulefill}\thinspace}
\providecommand{\MR}{\relax\ifhmode\unskip\space\fi MR }
\providecommand{\MRhref}[2]{%
  \href{http://www.ams.org/mathscinet-getitem?mr=#1}{#2}
}
\providecommand{\href}[2]{#2}

\end{document}